\documentclass{amsart}
\usepackage[utf8]{inputenc}
\usepackage[T1]{fontenc}
\usepackage[english]{babel}
\usepackage{amsthm}
\usepackage{amssymb}
\usepackage{amsmath}
\usepackage{empheq}
\usepackage{graphicx}
\usepackage[export]{adjustbox}
\usepackage[all,knot]{xy}
\usepackage{tikz}
\usetikzlibrary{shapes.geometric,patterns,decorations.pathreplacing}
\usepackage{tikz-cd}
\usepackage{enumitem}
\usepackage{fullpage}

\theoremstyle{definition}
\newtheorem{defi}{Definition}[section]

\theoremstyle{plain}
\newtheorem{thm}[defi]{Theorem}

\newtheorem{lemma}[defi]{Lemma}

\newtheorem{cor}[defi]{Corollary}

\theoremstyle{remark}
\newtheorem{ex}[defi]{Example}

\newtheorem{rem}[defi]{Remark}

\newtheorem*{fact}{Fact}

\setlength\parindent{0pt} 
\addtolength{\headsep}{0.5cm}

\newcommand{\scaleValue}{0.35}
\newcommand{\minRadius}{0.75cm}

\newcommand{\ba}[1]{\begin{equation*} \begin{array}{#1}}
\newcommand{\ea}{\end{array} \end{equation*}} 
\newcommand{\noi}{\noindent}
\newcommand{\sk}{\smallskip}
\newcommand{\mbf}{\mathbf}
\newcommand{\mbb}{\mathbb}
\newcommand{\mfk}{\mathfrak}
\newcommand{\mcl}{\mathcal}
\newcommand{\ra}{\rightarrow}
\newcommand{\lra}{\longrightarrow}

\newcommand{\lma}{\longmapsto}

\newcommand{\stars}{\includegraphics[scale=0.75,valign=c]{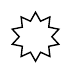} }

\newcommand{\icgDec}{\includegraphics[scale=0.3,valign=c]}

\newcommand{\LieOp}{\mathbf{Lie}}
\newcommand{\hoLie}{\mbf{HoLie}}
\newcommand{\oFun}{\mathcal{O}}
\newcommand{\assOp}{\mathbf{Ass}}
\newcommand{\hoAss}{\assOp_\infty}
\newcommand{\edom}{\mathbf{End}}
\newcommand{\oLie}[1]{\oFun(\LieOp_{#1})}
\newcommand{\oLieC}[1]{\oFun_c(\LieOp_{#1})}
\newcommand{\gra}{\mbf{Gra}}
\newcommand{\deform}{\mbf{Def}}
\newcommand{\defOLie}[1]{\deform(\LieOp_#1 \ra \oLie{#1})}

\newcommand{\fgc}{\mbf{fcGC}}
\newcommand{\grt}{\mfk{grt}}
\newcommand{\gc}{\mbf{GC}}
\newcommand{\GRT}{\mbf{GRT_1}}

\title{From the Lie operad to the Grothendieck-Teichmüller group}
\author{Vincent \textsc{Wolff}}
\address{Mathematics Research Unit, University of Luxembourg, Maison du Nombre, 6 Avenue de la Fonte,
 L-4364 Esch-sur-Alzette, Grand Duchy of Luxembourg }
\email{vincent.wolff@uni.lu}

\begin{document}

\begin{abstract}
We study the deformation complex of the standard morphism from the degree $d$ shifted Lie operad to its polydifferential version,
and prove that it is quasi-isomorphic to the Kontsevich graph complex $\gc_d$. In particular, we show that in the case $d=2$
the Grothendieck-Teichmüller  group $\GRT$ is a symmetry group (up to homotopy) of the aforementioned morphism. We also prove that in the case $d=1$ corresponding to the usual Lie algebras the standard morphism admits a unique homotopy non-trivial deformation which is described explicitly with the help of the universal enveloping construction. Finally we prove the rigidity of the strongly homotopy version of the universal enveloping functor in the Lie theory.
\end{abstract}

\maketitle


\begin{section}{Introduction}

The theory of operads, props and graph complexes undergoes a rapid development in recent years; its applications can be seen nowadays almost everywhere \cite{MR1898414,MR3966807,MR2954392}: in algebraic topology, in homological algebra, in (algebraic) geometry, in string topology, in deformation theory, in quantization theory etc.
This theory demonstrates a remarkable unity of mathematics, relating concepts and results obtained in different branches.


There is a remarkable polydifferential functor $\oFun$ from the category of props to the category of operads,


\ba{rccc}
\oFun:    &\mbf{Props} &\lra & \mbf{Operads} \\
& P & \lma & \oFun(P),
\ea


 \noi whose main defining property is the following one: given any representation of the prop $P$ in a differential graded vector space $V$, there is an associated representation of the operad $\oFun(P)$ on the symmetric tensor algebra $\odot{V}$ given in terms of the polydifferential operators with respect to the standard graded commutative product in $\odot{V}$. It was introduced in \cite{merkulov2015props} to study ribbon graph complexes in the theory of moduli spaces of algebraic curves and it was used, e.g. in \cite{MR4436207} to classify all homotopy classes of M. Kontsevich formality maps and in \cite{MR4179596} to study universal quantizations of Lie bialgebras. 


This paper is devoted to the study of the operad $\oFun(\LieOp_d)$ obtained by applying that functor $\oFun$ to the prop closure of the classical operad $\LieOp_d$ controlling degree $d$ shifted Lie algebras (the case $d=1$ corresponding to usual Lie algebras, often abbreviated  $\LieOp:=\LieOp_1$). This operad can be described in terms of graphs with two types of vertices and, as we discuss below, occurs naturally in the study of the well-known universal enveloping functor $\mfk{U}$ from the category of Lie algebras to the category of associative algebras. This functor can be understood as a morphism of operads 


\begin{equation}
\label{eqGuttMorphism}
\mcl{U}:\assOp \ra \oFun(\LieOp),
\end{equation}

\noi satisfying some natural non-triviality condition (see § \ref{guttQuant} below).


This operad comes equipped with an obvious morphism of operads


\begin{equation}
\label{eqnLie2OLie}
 i:\LieOp_d \ra \oFun_c(\LieOp_d)
\end{equation}


 \noi where $\oFun_c(\LieOp_d)$ is the suboperad of $\oLie{d}$ spanned by connected graphs.


The first main (and perhaps, very surprising) result of this paper says that the deformation complex $\deform(\LieOp_d \stackrel{i}{\ra} \oFun_c(\LieOp_d))$ of the above obvious morphism can be identified (via an explicit quasi-isomorphism shown in § \ref{KonGraphComp} ) with the famous M. Kontsevich's graph complex $\gc_d$! In particular, this result implies that the mysterious Grothendieck-Teichmüller group acts non-trivially and almost faithfully on homotopy classes of the map $i$.


This result gives us one of the simplest incarnations of the Grothendieck-Teichmüller group, and explains, perhaps, why it occurs in two seemingly different deformation quantization problems,
the universal quantization of Poisson structures (solved by M. Kontsevich in \cite{MR2062626} ) and the universal quantization of Lie bialgebras (formulated by V. Drinfeld and solved by Etingov-Kazhdan in \cite{MR1403351}) as both these theories involve $\oFun(\LieOp)$ as a sub-structure. 


The universal enveloping construction associates to a Lie algebra $V$ the corresponding associative algebra $\mfk{U}(V)$ which is isomorphic by the Poincaré-Birkhoff-Witt theorem to the symmetric tensor algebra $\odot V$. Hence it is not that surprising that this construction can be described in terms of the operad $\oFun(\LieOp)$. Indeed, following S. Gutt \cite{MR2795152} and K. Vinay \cite{vinayKontsevich}, one first interprets the universal enveloping algebra construction $\mfk{U}(V)$ as a star product construction on $\odot V$ given in terms of polydifferential operators. Second, one notices that this interpretation can be encoded as a morphism of operads $\assOp \ra \edom_{\odot V}$ satisfying some obvious non-triviality condition. The point is that this map factors through some morphism $\assOp \ra \oFun(\LieOp)$ and the canonical map $\oFun(\LieOp) \ra \edom_{\odot V}$ induced by the functor $\oFun$ applied to the given representation of $\LieOp$ in $V$. Hence all subtleties of the universal enveloping functor $\mfk{U}$ get encoded into the morphism of operads $\assOp \ra \oFun(\LieOp)$ satisfying some non-triviality condition (see § \ref{guttQuant} below).


We study in this paper the deformation complex $\deform(\assOp \stackrel{\mcl{U}}{\ra} \oFun(\LieOp))$ and prove that it is one-dimensional, the unique cohomology class corresponding to the rescaling freedom of the Lie brackets. The conclusion is that the morphism $\mcl{U}$ is unique (up to gauge equivalence). This result is not surprising, of course. One can infer it  from the classification theorem of Kontsevich formality maps given in terms of the graph complex $\gc_2$ \cite{MR4234594}. So we just show a new very short proof of this uniqueness. This proof has a small advantage that it carries over $\LieOp_\infty$ straightforwardly.


There are several attempts to generalize the universal enveloping construction from Lie algebras (controlled by the operad $\LieOp$) to strongly homotopy Lie algebras (which are controlled by $\LieOp_\infty$, the minimal resolution of $\LieOp$). All attempts involve the notion of strongly homotopy associative algebras, which was introduced by J. Stasheff in \cite{MR0158400} and which are controlled by the dg operad $\assOp_\infty$, the minimal resolution of $\assOp$. One such generalization of the functor $\mfk{U}$ is offered by M. Kontsevich formality map applied to linear polyvector fields; that generalization uses, in general, graphs with wheels, but, as has been proven by B. Shoikhet \cite{MR1854132}, the graphs with wheels can be removed so that one gets a strongly homotopy extension of the functor $\mfk{U}$ which works well for infinite dimensional $\LieOp_\infty$ algebras. Another construction was given by V. Baranovsky \cite{MR2470385} as the cobar construction of the Cartan-Chevalley-Eilenberg coalgebra associated to an $\LieOp_\infty$ algebra, by J. M. Moreno-Fernández  \cite{MR4381197} as an $\hoAss$ algebra  isomorphic as graded vector spaces to the free symmetric algebra associated to an $\LieOp_\infty$ algebra and by T. Lada and M. Markl in \cite{MR1327129}.


Both these constructions can be understood as a morphism of operads $\assOp_\infty \ra \oFun(\LieOp_\infty)$. We study the deformation complex of any of these two maps and prove that it is quasi-isomorphic to the deformation complex of the map considered in $(\ref{eqGuttMorphism})$, implying that both constructions are gauge equivalent. Moreover, any other attempt to construct such a generalization satisfying some natural conditions must be gauge equivalent to these ones as the cohomology of the complex $\deform(\hoAss \ra \oFun(\LieOp_\infty))$ is one-dimensional (see Corollary \ref{cor:GuttInfinity}). This cohomology result is in a full agreement  with the derived Poincaré-Birkhoff-Witt theorem established by A. Khoroshkin and P. Tamaroff in \cite{Khoroshkin2023}.

\end{section}

\section*{Acknowledgments}

I would like to thank my supervisor Sergei Merkulov for suggesting this topic and for many encouraging discussions.

\begin{section}{Reminder on classical operads}

Let us remind the construction of two classical operads \cite{MR2954392,MR1898414}.


Let $E=\{E(n)\}$ be the $\mbb{S}$-module defined by $E(n)=0$ except

\smallskip

\begin{equation*}
E(2)=\text{sgn}_2^d[d-1]=\text{span}\left\langle 
\begin{tikzpicture}[baseline={([yshift=-0.5ex]current bounding box.center)},scale=\scaleValue,transform shape]
\node [circle,draw,fill] (v0) at (0,0) {};
\node (v1) at (-1,-1) {\huge $1$};
\node (v2) at (1,-1) {\huge $2$};
\node (v3) at (0,1) {};
\draw (v0) edge (v1);
\draw (v0) edge (v2);
\draw (v0) edge (v3);
\end{tikzpicture}
=(-1)^d
\begin{tikzpicture}[baseline={([yshift=-0.5ex]current bounding box.center)},scale=\scaleValue,transform shape]
\node [circle,draw,fill] (v0) at (0,0) {};
\node (v1) at (-1,-1) {\huge $2$};
\node (v2) at (1,-1) {\huge $1$};
\node (v3) at (0,1) {};
\draw (v0) edge (v1);
\draw (v0) edge (v2);
\draw (v0) edge (v3);
\end{tikzpicture} \right\rangle. \\
\end{equation*}

\smallskip

The operad of degree $d$ shifted Lie algebras is defined to be the quotient $\LieOp_d:=\text{Free}(E)/I$, where $I$ is generated by the element

\smallskip

\begin{equation*}
\begin{tikzpicture}[baseline={([yshift=-0.5ex]current bounding box.center)},scale=\scaleValue,transform shape]
\node [circle,draw,fill] (v0) at (0,0) {};
\node [circle,draw,fill] (v1) at (-1,-1) {};
\node (v2) at (0,1) {};
\node (v3) at (1,-1) {\huge $3$};
\node (v4) at (-2,-2) {\huge $1$};
\node (v5) at (0,-2) {\huge $2$};
\draw (v0) edge (v1);
\draw (v0) edge (v2);
\draw (v0) edge (v3);
\draw (v1) edge (v4);
\draw (v1) edge (v5);
\end{tikzpicture}
+
\begin{tikzpicture}[baseline={([yshift=-0.5ex]current bounding box.center)},scale=\scaleValue,transform shape]
\node [circle,draw,fill] (v0) at (0,0) {};
\node [circle,draw,fill] (v1) at (-1,-1) {};
\node (v2) at (0,1) {};
\node (v3) at (1,-1) {\huge $2$};
\node (v4) at (-2,-2) {\huge $3$};
\node (v5) at (0,-2) {\huge $1$};
\draw (v0) edge (v1);
\draw (v0) edge (v2);
\draw (v0) edge (v3);
\draw (v1) edge (v4);
\draw (v1) edge (v5);
\end{tikzpicture}
+
\begin{tikzpicture}[baseline={([yshift=-0.5ex]current bounding box.center)},scale=\scaleValue,transform shape]
\node [circle,draw,fill] (v0) at (0,0) {};
\node [circle,draw,fill] (v1) at (-1,-1) {};
\node (v2) at (0,1) {};
\node (v3) at (1,-1) {\huge $1$};
\node (v4) at (-2,-2) {\huge $2$};
\node (v5) at (0,-2) {\huge $3$};
\draw (v0) edge (v1);
\draw (v0) edge (v2);
\draw (v0) edge (v3);
\draw (v1) edge (v4);
\draw (v1) edge (v5);
\end{tikzpicture}.
\end{equation*}


When studying linear combinations of graphs built from this generating corolla, it is useful, not to make sign mistakes, to view each such a corolla as a degree $d$ vertex with two degree $-d$ incoming half-edges and one degree $1$ outgoing half-edge. Hence for $d$ odd, such a graph has vertices of odd degree (and thus an ordering of this set, up to a permutation $\sigma$ and multiplication by $\text{sgn}(\sigma)$, has to be chosen), while internal edges have degree $1-d$. In the case $d$ even, the vertices are even, but the internal edges are odd so that an ordering of this set is chosen (up to permutation). This rule helps us understand what kind of implicit ordering is hidden in a graph like this:


\[
\begin{tikzpicture}[baseline={([yshift=-0.5ex]current bounding box.center)},scale=\scaleValue,transform shape]
\node (v1) at (0,1) {};
\node [circle,draw,fill] (v2) at (0,0) {};
\node [circle,draw,fill] (v3) at (-1,-1) {};
\node [circle,draw,fill] (v6) at (1,-1) {};
\node  (v4) at (-1.5,-2) {\huge $1$};
\node  (v5) at (-0.5,-2) {\huge $2$};
\node  (v7) at (0.5,-2) {\huge $3$};
\node  (v8) at (1.5,-2) {\huge $4$};
\draw  (v1) edge (v2);
\draw  (v2) edge (v3);
\draw  (v3) edge (v4);
\draw  (v3) edge (v5);
\draw  (v2) edge (v6);
\draw  (v6) edge (v7);
\draw  (v6) edge (v8);
\end{tikzpicture}.
\]


The minimal resolution of $\LieOp$ is given by the dg free operad $\hoLie_d=(\mathbf{Free}\langle E \rangle,\delta)$ generated by the $\mathbb{S}$-module

\smallskip

\[
E = \{E(n)=(\text{sgn}_n)^{\otimes |d|}[nd-d-1]=\text{span} \left\langle \begin{tikzpicture}[baseline={([yshift=-0.5ex]current bounding box.center)},scale=\scaleValue,transform shape]
\node  [circle,draw,fill] (v1) at (0,0) {};
\node (v2) at (0,1) {};
\node (v4) at (-1,-1) {\huge $2$};
\node (v3) at (-2,-1) {\huge $1$};
\node (v5) at (1,-1) {};
\node (v6) at (2,-1) {\huge $n$};
\node at (0,-1) {\huge $\cdots$};
\draw  (v1) edge (v2);
\draw  (v1) edge (v3);
\draw  (v1) edge (v4);
\draw  (v1) edge (v5);
\draw  (v1) edge (v6);
\end{tikzpicture} \right\rangle \}_{n \geq 2},
\]

\smallskip

\noindent for $n \geq 2$, where 

\begin{equation*}
(\text{sgn}_n)^{\otimes |d|} = \left\{ \begin{array}{ll}
\text{sgn}_n & \text{if $d$ is odd} \\
1_n & \text{if $d$ is even},
\end{array} \right.
\end{equation*}

\smallskip

\noi where $\text{sgn}_n$ denotes the $1$-dimensional sign representation and $1_n$ denotes the $1$-dimensional trivial representation of $\mbb{S}_n$. The differential is given on generators by

\smallskip

\[
\delta \left( \begin{tikzpicture}[baseline={([yshift=-0.5ex]current bounding box.center)},scale=\scaleValue,transform shape]
\node  [circle,draw,fill] (v1) at (0,0) {};
\node (v2) at (0,1) {};
\node (v4) at (-1,-1) {\huge $2$};
\node (v3) at (-2,-1) {\huge $1$};
\node (v5) at (1,-1) {};
\node (v6) at (2,-1) {\huge $n$};
\node at (0,-1) {\huge $\cdots$};
\draw  (v1) edge (v2);
\draw  (v1) edge (v3);
\draw  (v1) edge (v4);
\draw  (v1) edge (v5);
\draw  (v1) edge (v6);
\end{tikzpicture} \right)
= \sum_{\substack{I \sqcup J=[n] \\ |I|,|J| \geq 2}}{(-1)^{\text{sgn}(I,J)+(|I|+1)|J|}\includegraphics[scale=0.5,valign=c]{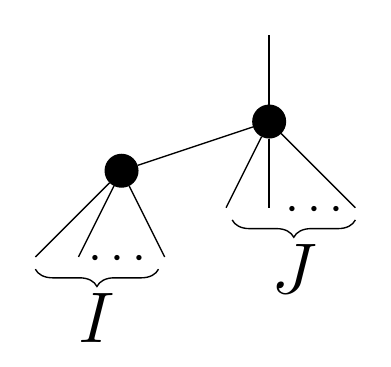}},
\]


\noi where $\text{sgn}(I,J)$ denotes the signature of the permutation $[n] \ra [I,J]$. The case $d=1$ is also denoted by $\LieOp_\infty:=\hoLie_1$.

\sk
\sk
\sk

Let $E=\{E(n)\}$ be the $\mbb{S}$-module defined by $E(n)=0$ except


\[
E(2)=\text{id}_2=\text{span}\left\langle\begin{tikzpicture}[baseline={([yshift=-0.5ex]current bounding box.center)},scale=\scaleValue,transform shape]
\node (v0) at (0,1) {};
\node (v1) [circle,draw] at (0,0) {};
\node (v2) at (-1,-1) {\huge $1$};
\node (v3) at (1,-1) {\huge $2$};
\draw (v0) edge (v1);
\draw (v1) edge (v2);
\draw (v1) edge (v3);
\end{tikzpicture} \right\rangle .
\]


The operad of associative algebras is defined by the quotient $\assOp:=\text{Free}(E)/I$ where $I$ is generated by


\[
\begin{tikzpicture}[baseline={([yshift=-0.5ex]current bounding box.center)},scale=\scaleValue,transform shape]
\node (v0) at (0,1) {};
\node (v1) [circle,draw] at (0,0) {};
\node (v2) [circle,draw] at (-1,-1) {};
\node (v3) at (1,-1) {\huge $3$};
\node (v4) at (-2,-2) {\huge $1$};
\node (v5) at (0,-2) {\huge $2$};
\draw (v0) edge (v1);
\draw (v1) edge (v2);
\draw (v1) edge (v3);
\draw (v2) edge (v4);
\draw (v2) edge (v5);
\end{tikzpicture}
-
\begin{tikzpicture}[baseline={([yshift=-0.5ex]current bounding box.center)},scale=\scaleValue,transform shape]
\node (v0) at (0,1) {};
\node (v1) [circle,draw] at (0,0) {};
\node (v2) [circle,draw] at (1,-1) {};
\node (v3) at (-1,-1) {\huge $1$};
\node (v4) at (0,-2) {\huge $2$};
\node (v5) at (2,-2) {\huge $3$};
\draw (v0) edge (v1);
\draw (v1) edge (v2);
\draw (v1) edge (v3);
\draw (v2) edge (v4);
\draw (v2) edge (v5);
\end{tikzpicture}.
\]


Its minimal resolution is given by the dg free operad $\assOp_\infty:=(\text{Free}(E),\delta)$, where $E$ is the $\mbb{S}$-module generated by


\[
E=\{E(n)=\mbb{K}[\mbb{S}_n][n-2]=\text{span} \left\langle\begin{tikzpicture}[baseline={([yshift=-.5ex]current bounding box.center)},scale=\scaleValue,transform shape]
\node [draw,circle] (v2) at (0,0) {};
\node [label=below:\huge $\tau(1)$] (v1) at (-3,-1) {};
\node [label=below:\huge $\tau(2)$] (v3) at (-1.5,-1) {};
\node (v4) at (1.5,-1) {};
\node [label=below:\huge $\tau(n)$] (v5) at (3,-1) {};
\node (v6) at (0,1) {};
\draw  (v1) edge (v2);
\draw  (v3) edge (v2);
\draw  (v2) edge (v4);
\draw  (v2) edge (v5);
\draw  (v2) edge (v6);
\node at (0.2,-1.5) {\huge $\cdots$};
\end{tikzpicture} \right\rangle_{\tau \in \mbb{S}_n} \}_{n \geq 2}
\]


\noi and the differential is given on generators by


\[
\delta \left( \begin{tikzpicture}[baseline={([yshift=-.5ex]current bounding box.center)},scale=\scaleValue,transform shape]
\node [draw,circle] (v2) at (0,0) {};
\node [label=below:\huge $\tau(1)$] (v1) at (-3,-1) {};
\node [label=below:\huge $\tau(2)$] (v3) at (-1.5,-1) {};
\node (v4) at (1.5,-1) {};
\node [label=below:\huge $\tau(n)$] (v5) at (3,-1) {};
\node (v6) at (0,1) {};
\draw  (v1) edge (v2);
\draw  (v3) edge (v2);
\draw  (v2) edge (v4);
\draw  (v2) edge (v5);
\draw  (v2) edge (v6);
\node at (0.2,-1.5) {\huge $\cdots$};
\end{tikzpicture} \right):=\sum_{k=0}^{n-2}{\sum_{l=2}^{n-k}{(-1)^{k+l(n-k-l)+1} \; \begin{tikzpicture}[baseline={([yshift=-.5ex]current bounding box.center)},scale=\scaleValue,transform shape]
\node [draw,circle] (v1) at (0,0) {};
\node (v2) at (0,1) {};
\node [label=below:\huge $\tau(1)$] (v3) at (-4.5,-1) {};
\node [label=below:\huge $\tau(k)$] (v4) at (-2,-1) {};
\node [draw,circle] (v5) at (0,-2.5) {};
\node [label=below:\huge $\tau(k+l+1)$] (v6) at (3,-1) {};
\node [label=below:\huge $\tau(n)$] (v7) at (6.5,-1) {};
\node [label=below:\huge $\tau(k+1)$] (v8) at (-2,-3.5) {};
\node (v9) at (-1,-3.5) {};
\node (v10) at (1,-3.5) {};
\node [label=below:\huge $\tau(k+l)$] (v11) at (2,-3.5) {};
\draw  (v1) edge (v2);
\draw  (v1) edge (v3);
\draw  (v4) edge (v1);
\draw  (v1) edge (v5);
\draw  (v1) edge (v6);
\draw  (v1) edge (v7);
\draw  (v5) edge (v8);
\draw  (v9) edge (v5);
\draw  (v5) edge (v10);
\draw  (v5) edge (v11);
\node at (-3.2,-1.5) {\huge $\cdots$};
\node at (0,-3.5) {\huge $\cdots$};
\node at (4,-1) {\huge $\cdots$};
\end{tikzpicture}}}.
\]


Representations of $\hoAss$ in a dg vector space $V$ are precisely the strongly homotopy associative algebras introduced by J. Stasheff in \cite{MR0158400}.

As a final remark we recall that there is a morphism of operads $\LieOp \ra \assOp$ given by

\[
\begin{tikzpicture}[baseline={([yshift=-0.5ex]current bounding box.center)},scale=\scaleValue,transform shape]
\node (v0) at (0,1) {};
\node (v1) [circle,draw,fill] at (0,0) {};
\node (v2) at (-1,-1) {\huge $1$};
\node (v3) at (1,-1) {\huge $2$};
\draw (v0) edge (v1);
\draw (v1) edge (v2);
\draw (v1) edge (v3);
\end{tikzpicture}
 \lma 
\frac{1}{2} \left(
\begin{tikzpicture}[baseline={([yshift=-0.5ex]current bounding box.center)},scale=\scaleValue,transform shape]
\node (v0) at (0,1) {};
\node (v1) [circle,draw] at (0,0) {};
\node (v2) at (-1,-1) {\huge $1$};
\node (v3) at (1,-1) {\huge $2$};
\draw (v0) edge (v1);
\draw (v1) edge (v2);
\draw (v1) edge (v3);
\end{tikzpicture}
-\begin{tikzpicture}[baseline={([yshift=-0.5ex]current bounding box.center)},scale=\scaleValue,transform shape]
\node (v0) at (0,1) {};
\node (v1) [circle,draw] at (0,0) {};
\node (v2) at (-1,-1) {\huge $2$};
\node (v3) at (1,-1) {\huge $1$};
\draw (v0) edge (v1);
\draw (v1) edge (v2);
\draw (v1) edge (v3);
\end{tikzpicture} \right).
\]

This map will be used in Section \ref{uniqueDeformation}.

\end{section}

\begin{section}{Polydifferential functor}

We need an endofunctor in the category of dg operads constructed in \cite{merkulov2015props}. Given an augmented prop $\mcl{P}$ there is an operad $\oFun(\mcl{P})$ such that for any representation $\mcl{P} \ra \edom_V$ in a dg vector space $V$ there is a representation $\oFun(\mcl{P}) \ra \edom_{\odot^\bullet(V)}$ in the graded algebra $\odot^\bullet(V)$ where the elements of $\oFun(\mcl{P})$ act as polydifferential operators. When $\mcl{P}$ is an operad we consider its prop closure, which we also denote $\mcl{P}$. The elements of the prop closure are given by disjoint unions of elements of $\mcl{P}$ with inputs and outputs labelled differently. For example


\[
\begin{tikzpicture}[baseline={([yshift=-0.5ex]current bounding box.center)},scale=\scaleValue,transform shape]
\node (v0) at (0,1) {\huge $1$};
\node (v1) [circle,draw,fill] at (0,0) {};
\node (v2) [circle,draw,fill] at (-1,-1) {};
\node (v3) at (-2,-2) {\huge $1$};
\node (v4) at (0,-2) {\huge $2$};
\node (v5) at (1,-1) {\huge $3$};
\draw (v0) edge (v1);
\draw (v1) edge (v2);
\draw (v1) edge (v5);
\draw (v2) edge (v3);
\draw (v2) edge (v4);
\end{tikzpicture}
\, \,
\begin{tikzpicture}[baseline={([yshift=-0.5ex]current bounding box.center)},scale=\scaleValue,transform shape]
\node (v0) at (0,1) {\huge $2$};
\node (v1) [circle,draw,fill] at (0,0) {};
\node (v2) at (-1,-1) {\huge $4$};
\node (v3) at (1,-1) {\huge $5$};
\draw (v0) edge (v1);
\draw (v1) edge (v2);
\draw (v1) edge (v3);
\end{tikzpicture}
\, \,
\in \LieOp_d(2,5).
\]


We are mainly interested in the operad $\oLie{d}$ obtained by applying the functor $\oFun$ to the prop closure of the operad of degree $d$ shifted Lie algebras. Elements of $\oLie{d}$ are obtained joining the outputs of an element of the prop closure of $\LieOp_d$ to a new white vertex and also by partitioning all of the input legs into groups and joining each group them to a new labelled white vertex. For example

\[
\begin{tikzpicture}[baseline={([yshift=-0.5ex]current bounding box.center)},scale=\scaleValue,transform shape]
\node [circle,draw,minimum size = \minRadius] (v1) at (0,1.25) {};
\node [circle,draw,fill] (v2) at (-1,0) {};
\node [circle,draw] (v3) at (0.5,-2) {\huge $3$};
\node [circle,draw,fill] (v4) at (-2,-1) {};
\node [circle,draw] (v5) at (-3,-2) {\huge $1$};
\node [circle,draw] (v6) at (-1,-2) {\huge $2$};
\node [circle,draw,fill] (v7) at (2,0) {};
\node [circle,draw] (v8) at (2,-2) {\huge $4$};
\draw  (v1) edge (v2);
\draw  (v2) edge (v3);
\draw  (v2) edge (v4);
\draw  (v4) edge (v5);
\draw  (v4) edge (v6);
\draw [bend right] (v7) edge (v8);
\draw [bend left] (v7) edge (v8);
\draw [bend left] (v1) edge (v7);
\end{tikzpicture} \in \oFun(\LieOp_d)(4).
\]


The black vertices are called \textit{internal vertices} and edges between internal vertices are called \textit{internal edges}. By erasing all white vertices we obtain a collection of disjoint trees which will be called \textit{internal irreducible components} (i.c.c.).


Note that for $d$ even we implicitly assume an ordering of the internal edges as well as an ordering of the in-edges of the white out-vertex (up to a sign). For $d$ odd, we assume implicitly an ordering of the internal vertices as well as an ordering (up to a sign) of the out-edges attached to each white in-vertex.




The compositions $\Gamma_1 \text{o}_i \Gamma_2$ in $\oLie{d}$ work as follows: first we erase the white out-vertex of $\Gamma_2$ and the $i$th white in-vertex of $\Gamma_1$. This step creates many "hanging" out-edges in $\Gamma_2$ and in-edges in $\Gamma_1$. Second we sum over all possible attachments of the hanging out-edges of $\Gamma_2$ to the hanging in-edges of $\Gamma_1$ and the white out-vertex of $\Gamma_1$. Finally we sum over all attachments of remaining in-edges of $\Gamma_1$ to the white in-vertices of $\Gamma_2$.

\begin{ex}

\begin{enumerate}[label=\arabic*)]

\item

$\begin{tikzpicture}[baseline={([yshift=-0.5ex]current bounding box.center)},scale=\scaleValue,transform shape]
\node [circle,draw,fill] (v1) at (0,0) {};
\node [circle,draw] (v2) at (-1,-1) {\huge $1$};
\node [circle,draw] (v3) at (1,-1) {\huge $2$};
\node [circle,minimum size =\minRadius,draw] (v4) at (0,1.25) {};
\draw  (v1) edge (v2);
\draw  (v1) edge (v3);
\draw  (v1) edge (v4);
\end{tikzpicture}
\text{o}_2 \begin{tikzpicture}[baseline={([yshift=-0.5ex]current bounding box.center)},scale=\scaleValue,transform shape]
\node [circle,draw,fill] (v1) at (0,0) {};
\node [circle,draw] (v2) at (-1,-1) {\huge $1$};
\node [circle,draw] (v3) at (1,-1) {\huge $2$};
\node [circle,minimum size =\minRadius,draw] (v4) at (0,1.25) {};
\draw  (v1) edge (v2);
\draw  (v1) edge (v3);
\draw  (v1) edge (v4);
\end{tikzpicture}
=
\begin{tikzpicture}[baseline={([yshift=-0.5ex]current bounding box.center)},scale=\scaleValue,transform shape]
\node (v0) [circle,draw,minimum size = \minRadius] at (0,1.25) {};
\node (v1) [circle,draw,fill] at (0,0) {};
\node (v2) [circle,draw] at (-1,-1) {\huge $1$};
\node (v3) [circle,draw,fill] at (1,-1) {};
\node (v4) [circle,draw] at (0,-2) {\huge $2$};
\node (v5) [circle,draw] at (2,-2) {\huge $3$};
\draw (v0) edge (v1);
\draw (v1) edge (v2);
\draw (v1) edge (v3);
\draw (v3) edge (v4);
\draw (v3) edge (v5);
\end{tikzpicture}
+
\begin{tikzpicture}[baseline={([yshift=-0.5ex]current bounding box.center)},scale=\scaleValue,transform shape]
\node (v0) [circle,draw,minimum size = \minRadius] at (0,1.25) {};
\node (v1) [circle,draw,fill] at (-1,0) {};
\node (v2) [circle,draw,fill] at (1,0) {};
\node (v3) [circle,draw] at (-2,-1) {\huge $1$};
\node (v4) [circle,draw] at (0,-1) {\huge $2$};
\node (v5) [circle,draw] at (2,-1) {\huge $3$};
\draw (v0) edge (v1);
\draw (v0) edge (v2);
\draw (v1) edge (v3);
\draw (v1) edge (v4);
\draw (v2) edge (v4);
\draw (v2) edge (v5);
\end{tikzpicture}
+
\begin{tikzpicture}[baseline={([yshift=-0.5ex]current bounding box.center)},scale=\scaleValue,transform shape]
\node (v0) [circle,draw,minimum size = \minRadius] at (0,1.25) {};
\node (v1) [circle,draw,fill] at (-1,0) {};
\node (v2) [circle,draw,fill] at (1,0) {};
\node (v3) [circle,draw] at (-2,-1) {\huge $1$};
\node (v4) [circle,draw] at (0,-1) {\huge $2$};
\node (v5) [circle,draw] at (2,-1) {\huge $3$};
\draw (v0) edge (v1);
\draw (v0) edge (v2);
\draw (v1) edge (v3);
\draw (v1) edge (v5);
\draw (v2) edge (v4);
\draw (v2) edge (v5);
\end{tikzpicture}.
$

\item

$
\begin{tikzpicture}[baseline={([yshift=-0.5ex]current bounding box.center)},scale=\scaleValue,transform shape]
\node [circle,draw,minimum size = \minRadius] (v2) at (0,1.25) {};
\node [circle,draw,fill] (v1) at (-1.5,0) {};
\node [circle,draw,fill] (v3) at (1.5,0) {};
\node [circle,draw] (v6) at (-2.5,-1.5) {\Large $1$};
\node [circle,draw] (v7) at (-0.5,-1.5) {\Large $2$};
\node [circle,draw] (v5) at (1.5,-1.5) {\Large $3$};
\draw  (v1) edge (v2);
\draw  (v2) edge (v3);
\draw  (v3) [bend left] edge (v5);
\draw  (v1) edge (v6);
\draw  (v1) edge (v7);
\draw [bend right] (v3) edge (v5);
\end{tikzpicture}
\text{o}_1
\begin{tikzpicture}[baseline={([yshift=-0.5ex]current bounding box.center)},scale=\scaleValue,transform shape]
\node (v0) [circle,draw,minimum size = \minRadius] at (0,1.25) {};
\node (v1) [circle,draw,fill] at (0,0) {};
\node (v2) [circle,draw] at (-1,-1) {\huge $1$};
\node (v3) [circle,draw] at (1,-1) {\huge $2$};
\draw (v0) edge (v1);
\draw (v1) edge (v2);
\draw (v1) edge (v3);
\end{tikzpicture}
=
\begin{tikzpicture}[baseline={([yshift=-0.5ex]current bounding box.center)},scale=\scaleValue,transform shape]
\node [circle,draw,minimum size = \minRadius] (v1) at (0,1.25) {};
\node [circle,draw,fill] (v2) at (-1.5,0) {};
\node [circle,draw,fill] (v3) at (-2.5,-1) {};
\node [circle,draw,fill] (v7) at (1.5,0) {};
\node [circle,draw] (v4) at (-3.5,-2) {\huge $1$};
\node [circle,draw] (v5) at (-1.5,-2) {\huge $2$};
\node [circle,draw] (v6) at (-0.5,-1) {\huge $3$};
\node [circle,draw] (v8) at (1.5,-2) {\huge $4$};
\draw  (v1) edge (v2);
\draw  (v2) edge (v3);
\draw  (v3) edge (v4);
\draw  (v3) edge (v5);
\draw  (v2) edge (v6);
\draw  (v1) edge (v7);
\draw [bend left] (v7) edge (v8);
\draw [bend right] (v7) edge (v8);
\end{tikzpicture}
+
\begin{tikzpicture}[baseline={([yshift=-0.5ex]current bounding box.center)},scale=\scaleValue,transform shape]
\node [circle,draw,minimum size = \minRadius] (v2) at (0,1.25) {};
\node [circle,draw,fill] (v1) at (-1.5,0) {};
\node [circle,draw] (v3) at (-2.5,-2) {\huge $1$};
\node [circle,draw] (v4) at (-0.5,-2) {\huge $2$};
\node [circle,draw,fill] (v5) at (1,0) {};
\node [circle,draw,fill] (v6) at (2.5,0) {};
\node [circle,draw] (v8) at (1,-2) {\huge $3$};
\node [circle,draw] (v7) at (2.5,-2) {\huge $4$};
\draw  (v2) edge (v1);
\draw  (v2) edge (v5);
\draw  (v2) edge (v6);
\draw [bend left] (v6) edge (v7);
\draw [bend right] (v6) edge (v7);
\draw  (v1) edge (v3);
\draw  (v1) edge (v4);
\draw  (v5) edge (v8);
\draw  (v5) edge (v3);
\end{tikzpicture}
+
\begin{tikzpicture}[baseline={([yshift=-0.5ex]current bounding box.center)},scale=\scaleValue,transform shape]
\node [circle,draw,minimum size = \minRadius] (v2) at (0,1.25) {};
\node [circle,draw,fill] (v1) at (-1.5,0) {};
\node [circle,draw] (v3) at (-2.5,-2) {\huge $1$};
\node [circle,draw] (v4) at (-0.5,-2) {\huge $2$};
\node [circle,draw,fill] (v5) at (1,0) {};
\node [circle,draw,fill] (v6) at (2.5,0) {};
\node [circle,draw] (v8) at (1,-2) {\huge $3$};
\node [circle,draw] (v7) at (2.5,-2) {\huge $4$};
\draw  (v2) edge (v1);
\draw  (v2) edge (v5);
\draw  (v2) edge (v6);
\draw [bend left] (v6) edge (v7);
\draw [bend right] (v6) edge (v7);
\draw  (v1) edge (v3);
\draw  (v1) edge (v4);
\draw  (v5) edge (v8);
\draw  (v5) edge (v4);
\end{tikzpicture}.
$
\begin{flushleft}

\end{flushleft}

\end{enumerate}

\end{ex}


We define $\oLieC{d}$ to be the suboperad spanned by connected graphs, i.e. we assume that the graphs remain connected when we erase the white output vertex.


\begin{lemma}

There is a morphism of operads 


\begin{equation}
\label{morphism_lie_olie}
i:\LieOp_d \longrightarrow \oLieC{d}
\end{equation}


\noi given by


\[
\begin{tikzpicture}[baseline={([yshift=-0.5ex]current bounding box.center)},scale=\scaleValue,transform shape]
\node (v0) at (0,1) {};
\node (v1) [circle,draw,fill] at (0,0) {};
\node (v2) at (-1,-1) {\huge $1$};
\node (v3) at (1,-1) {\huge $2$};
\draw (v0) edge (v1);
\draw (v1) edge (v2);
\draw (v1) edge (v3);
\end{tikzpicture}
\longmapsto
\begin{tikzpicture}[baseline={([yshift=-0.5ex]current bounding box.center)},scale=\scaleValue,transform shape]
\node [circle,draw,fill] (v1) at (0,0) {};
\node [circle,draw] (v2) at (-1,-1) {\huge $1$};
\node [circle,draw] (v3) at (1,-1) {\huge $2$};
\node [circle,draw,minimum size = \minRadius] (v4) at (0,1.25) {};
\draw  (v1) edge (v2);
\draw  (v1) edge (v3);
\draw  (v1) edge (v4);
\end{tikzpicture}.
\]

\end{lemma}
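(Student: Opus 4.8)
The proof will use the presentation $\LieOp_d=\mathbf{Free}(E)/I$: giving a morphism of operads $\LieOp_d\to\oLieC{d}$ is the same as giving an $\mathbb{S}$-module map $E\to\oLieC{d}$ whose induced operad morphism $\bar\imath\colon\mathbf{Free}(E)\to\oLieC{d}$ annihilates the ideal $I$. We send the generating corolla of $E(2)$ to the graph $\Upsilon$ with one internal vertex, two white in-vertices and one white out-vertex. The first point to check is compatibility with the $\mathbb{S}_2$-action, i.e.\ that transposing the two white in-vertices of $\Upsilon$ multiplies it by $(-1)^d$, in accordance with $E(2)=\text{sgn}_2^d[d-1]$. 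This is immediate: the internal vertex of $\Upsilon$ is a representative of the generator of $\LieOp_d$ sitting inside its prop closure, so transposing the two white in-vertices just transposes the two incoming half-edges of that internal vertex, which by the very definition of $\LieOp_d$ costs the sign $(-1)^d$. Hence $\bar\imath$ exists; and since $\Upsilon$ stays connected after deleting its white out-vertex and $\oLieC{d}$ is a suboperad, $\bar\imath$ factors through $\oLieC{d}$.

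It remains to verify that $\bar\imath$ kills the Jacobi relator displayed above, which in $\mathbf{Free}(E)(3)$ is the sum of three trees $\tau_{12,3}+\tau_{31,2}+\tau_{23,1}$. Applying $\bar\imath$ and using the composition rule of $\oLie{d}$, each $\bar\imath(\tau_{ab,c})=\Upsilon\circ_1\Upsilon$ (up to the evident relabelling) expands into exactly three graphs, precisely as in the example above: one \emph{nested} graph, in which the two internal vertices are joined by an internal edge — this is the naive image of the tree $\tau_{ab,c}$, i.e.\ the graph whose internal part, read off after deleting the white vertices, is $\tau_{ab,c}\in\LieOp_d$ — together with two \emph{broadcast} graphs, in which both internal vertices are attached directly to the white out-vertex and share exactly one white in-vertex (the one labelled $a$, resp.\ $b$).

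The nested contributions cancel for a soft reason: all three carry the same white-vertex data, so their sum corresponds, under the identification of that ``white-skeleton'' sector of $\oLie{d}$ with $\LieOp_d(3)$, to $\tau_{12,3}+\tau_{31,2}+\tau_{23,1}\in\LieOp_d(3)$, which is $0$ since $\LieOp_d$ satisfies the Jacobi identity. The six broadcast graphs cancel in pairs: for each of the three ways of choosing an unordered pair of two-element subsets of $\{1,2,3\}$ overlapping in one element, exactly two of the six broadcast graphs have that shape, and those two differ only by the transposition of their two internal vertices (equivalently, of the two edges entering the white out-vertex). By the orientation conventions stated after the definition of $\LieOp_d$ — an ordering of the internal vertices when $d$ is odd, an ordering of the in-edges of the white out-vertex when $d$ is even — this transposition contributes the sign $-1$, so the two graphs cancel. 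Hence $\bar\imath(J)=0$ and $i\colon\LieOp_d\to\oLieC{d}$ is a well-defined morphism of operads.

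The only genuinely delicate step is this last sign check: one must confirm that the transposition relating the two members of each broadcast pair yields $-1$ uniformly in the parity of $d$, which means carefully tracking the degree $-d$ incoming and degree $1$ outgoing half-edges at the internal vertices according to the bookkeeping rule stated after the definition of $\LieOp_d$. Everything else is formal.
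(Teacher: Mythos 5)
Your proof is correct and follows essentially the same route as the paper's: expand the image of the Jacobi relator via the composition rule of $\oFun(\LieOp_d)$ into three nested graphs plus six broadcast graphs, kill the nested ones by the Jacobi identity holding inside the prop closure of $\LieOp_d$, and cancel the broadcast ones in pairs via the transposition of the two internal vertices (for $d$ odd) or of the two edges at the white out-vertex (for $d$ even), each costing a sign $-1$. The paper carries out the final sign check explicitly on a representative pair in each parity, which is the one step you flag as delicate but do not fully write out; otherwise the two arguments coincide, with your added equivariance check of the generator being a harmless (and correct) supplement.
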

\begin{proof}

It suffices to check that the Jacobi identity is mapped to $0$. Indeed we need that 


\ba{rl}
\begin{tikzpicture}[baseline={([yshift=-0.5ex]current bounding box.center)},scale=0.3,transform shape]
\node (v0) [circle,draw,minimum size = \minRadius] at (0,1.25) {};
\node (v1) [circle,draw,fill] at (0,0) {};
\node (v2) [circle,draw] at (-1,-1) {\huge $1$};
\node (v3) [circle,draw,fill] at (1,-1) {};
\node (v4) [circle,draw] at (0,-2) {\huge $2$};
\node (v5) [circle,draw] at (2,-2) {\huge $3$};
\draw (v0) edge (v1);
\draw (v1) edge (v2);
\draw (v1) edge (v3);
\draw (v3) edge (v4);
\draw (v3) edge (v5);
\end{tikzpicture}
+
\begin{tikzpicture}[baseline={([yshift=-0.5ex]current bounding box.center)},scale=0.3,transform shape]
\node (v0) [circle,draw,minimum size = \minRadius] at (0,1.25) {};
\node (v1) [circle,draw,fill] at (-1,0) {};
\node (v2) [circle,draw,fill] at (1,0) {};
\node (v3) [circle,draw] at (-2,-1) {\huge $1$};
\node (v4) [circle,draw] at (0,-1) {\huge $2$};
\node (v5) [circle,draw] at (2,-1) {\huge $3$};
\draw (v0) edge (v1);
\draw (v0) edge (v2);
\draw (v1) edge (v3);
\draw (v1) edge (v4);
\draw (v2) edge (v4);
\draw (v2) edge (v5);
\end{tikzpicture}
+
\begin{tikzpicture}[baseline={([yshift=-0.5ex]current bounding box.center)},scale=0.3,transform shape]
\node (v0) [circle,draw,minimum size = \minRadius] at (0,1.25) {};
\node (v1) [circle,draw,fill] at (-1,0) {};
\node (v2) [circle,draw,fill] at (1,0) {};
\node (v3) [circle,draw] at (-2,-1) {\huge $1$};
\node (v4) [circle,draw] at (0,-1) {\huge $2$};
\node (v5) [circle,draw] at (2,-1) {\huge $3$};
\draw (v0) edge (v1);
\draw (v0) edge (v2);
\draw (v1) edge (v3);
\draw (v1) edge (v5);
\draw (v2) edge (v4);
\draw (v2) edge (v5);
\end{tikzpicture}
+ 
\begin{tikzpicture}[baseline={([yshift=-0.5ex]current bounding box.center)},scale=0.3,transform shape]
\node (v0) [circle,draw,minimum size = \minRadius] at (0,1.25) {};
\node (v1) [circle,draw,fill] at (0,0) {};
\node (v2) [circle,draw] at (-1,-1) {\huge $3$};
\node (v3) [circle,draw,fill] at (1,-1) {};
\node (v4) [circle,draw] at (0,-2) {\huge $1$};
\node (v5) [circle,draw] at (2,-2) {\huge $2$};
\draw (v0) edge (v1);
\draw (v1) edge (v2);
\draw (v1) edge (v3);
\draw (v3) edge (v4);
\draw (v3) edge (v5);
\end{tikzpicture}
+
\begin{tikzpicture}[baseline={([yshift=-0.5ex]current bounding box.center)},scale=0.3,transform shape]
\node (v0) [circle,draw,minimum size = \minRadius] at (0,1.25) {};
\node (v1) [circle,draw,fill] at (-1,0) {};
\node (v2) [circle,draw,fill] at (1,0) {};
\node (v3) [circle,draw] at (-2,-1) {\huge $3$};
\node (v4) [circle,draw] at (0,-1) {\huge $1$};
\node (v5) [circle,draw] at (2,-1) {\huge $2$};
\draw (v0) edge (v1);
\draw (v0) edge (v2);
\draw (v1) edge (v3);
\draw (v1) edge (v4);
\draw (v2) edge (v4);
\draw (v2) edge (v5);
\end{tikzpicture}
+
\begin{tikzpicture}[baseline={([yshift=-0.5ex]current bounding box.center)},scale=0.3,transform shape]
\node (v0) [circle,draw,minimum size = \minRadius] at (0,1.25) {};
\node (v1) [circle,draw,fill] at (-1,0) {};
\node (v2) [circle,draw,fill] at (1,0) {};
\node (v3) [circle,draw] at (-2,-1) {\huge $3$};
\node (v4) [circle,draw] at (0,-1) {\huge $1$};
\node (v5) [circle,draw] at (2,-1) {\huge $2$};
\draw (v0) edge (v1);
\draw (v0) edge (v2);
\draw (v1) edge (v3);
\draw (v1) edge (v5);
\draw (v2) edge (v4);
\draw (v2) edge (v5);
\end{tikzpicture}
+ 
\begin{tikzpicture}[baseline={([yshift=-0.5ex]current bounding box.center)},scale=0.3,transform shape]
\node (v0) [circle,draw,minimum size = \minRadius] at (0,1.25) {};
\node (v1) [circle,draw,fill] at (0,0) {};
\node (v2) [circle,draw] at (-1,-1) {\huge $2$};
\node (v3) [circle,draw,fill] at (1,-1) {};
\node (v4) [circle,draw] at (0,-2) {\huge $3$};
\node (v5) [circle,draw] at (2,-2) {\huge $1$};
\draw (v0) edge (v1);
\draw (v1) edge (v2);
\draw (v1) edge (v3);
\draw (v3) edge (v4);
\draw (v3) edge (v5);
\end{tikzpicture}
+
\begin{tikzpicture}[baseline={([yshift=-0.5ex]current bounding box.center)},scale=0.3,transform shape]
\node (v0) [circle,draw,minimum size = \minRadius] at (0,1.25) {};
\node (v1) [circle,draw,fill] at (-1,0) {};
\node (v2) [circle,draw,fill] at (1,0) {};
\node (v3) [circle,draw] at (-2,-1) {\huge $2$};
\node (v4) [circle,draw] at (0,-1) {\huge $3$};
\node (v5) [circle,draw] at (2,-1) {\huge $1$};
\draw (v0) edge (v1);
\draw (v0) edge (v2);
\draw (v1) edge (v3);
\draw (v1) edge (v4);
\draw (v2) edge (v4);
\draw (v2) edge (v5);
\end{tikzpicture}
+
\begin{tikzpicture}[baseline={([yshift=-0.5ex]current bounding box.center)},scale=0.3,transform shape]
\node (v0) [circle,draw,minimum size = \minRadius] at (0+10,1.25) {};
\node (v1) [circle,draw,fill] at (-1+10,0) {};
\node (v2) [circle,draw,fill] at (1+10,0) {};
\node (v3) [circle,draw] at (-2+10,-1) {\huge $2$};
\node (v4) [circle,draw] at (0+10,-1) {\huge $3$};
\node (v5) [circle,draw] at (2+10,-1) {\huge $1$};
\draw (v0) edge (v1);
\draw (v0) edge (v2);
\draw (v1) edge (v3);
\draw (v1) edge (v5);
\draw (v2) edge (v4);
\draw (v2) edge (v5);
\end{tikzpicture}
\ea


vanishes. This expression reduces to

\[
\begin{tikzpicture}[baseline={([yshift=-0.5ex]current bounding box.center)},scale=\scaleValue,transform shape]
\node (v0) [circle,draw,minimum size = \minRadius] at (0,1.25) {};
\node (v1) [circle,draw,fill] at (-1,0) {};
\node (v2) [circle,draw,fill] at (1,0) {};
\node (v3) [circle,draw] at (-2,-1) {\huge $1$};
\node (v4) [circle,draw] at (0,-1) {\huge $2$};
\node (v5) [circle,draw] at (2,-1) {\huge $3$};
\draw (v0) edge (v1);
\draw (v0) edge (v2);
\draw (v1) edge (v3);
\draw (v1) edge (v4);
\draw (v2) edge (v4);
\draw (v2) edge (v5);
\end{tikzpicture}
+
\begin{tikzpicture}[baseline={([yshift=-0.5ex]current bounding box.center)},scale=\scaleValue,transform shape]
\node (v0) [circle,draw,minimum size = \minRadius] at (0,1.25) {};
\node (v1) [circle,draw,fill] at (-1,0) {};
\node (v2) [circle,draw,fill] at (1,0) {};
\node (v3) [circle,draw] at (-2,-1) {\huge $1$};
\node (v4) [circle,draw] at (0,-1) {\huge $2$};
\node (v5) [circle,draw] at (2,-1) {\huge $3$};
\draw (v0) edge (v1);
\draw (v0) edge (v2);
\draw (v1) edge (v3);
\draw (v1) edge (v5);
\draw (v2) edge (v4);
\draw (v2) edge (v5);
\end{tikzpicture}
+
\begin{tikzpicture}[baseline={([yshift=-0.5ex]current bounding box.center)},scale=\scaleValue,transform shape]
\node (v0) [circle,draw,minimum size = \minRadius] at (0,1.25) {};
\node (v1) [circle,draw,fill] at (-1,0) {};
\node (v2) [circle,draw,fill] at (1,0) {};
\node (v3) [circle,draw] at (-2,-1) {\huge $3$};
\node (v4) [circle,draw] at (0,-1) {\huge $1$};
\node (v5) [circle,draw] at (2,-1) {\huge $2$};
\draw (v0) edge (v1);
\draw (v0) edge (v2);
\draw (v1) edge (v3);
\draw (v1) edge (v4);
\draw (v2) edge (v4);
\draw (v2) edge (v5);
\end{tikzpicture}
+
\begin{tikzpicture}[baseline={([yshift=-0.5ex]current bounding box.center)},scale=\scaleValue,transform shape]
\node (v0) [circle,draw,minimum size = \minRadius] at (0,1.25) {};
\node (v1) [circle,draw,fill] at (-1,0) {};
\node (v2) [circle,draw,fill] at (1,0) {};
\node (v3) [circle,draw] at (-2,-1) {\huge $3$};
\node (v4) [circle,draw] at (0,-1) {\huge $1$};
\node (v5) [circle,draw] at (2,-1) {\huge $2$};
\draw (v0) edge (v1);
\draw (v0) edge (v2);
\draw (v1) edge (v3);
\draw (v1) edge (v5);
\draw (v2) edge (v4);
\draw (v2) edge (v5);
\end{tikzpicture}
+
\begin{tikzpicture}[baseline={([yshift=-0.5ex]current bounding box.center)},scale=\scaleValue,transform shape]
\node (v0) [circle,draw,minimum size = \minRadius] at (0,1.25) {};
\node (v1) [circle,draw,fill] at (-1,0) {};
\node (v2) [circle,draw,fill] at (1,0) {};
\node (v3) [circle,draw] at (-2,-1) {\huge $2$};
\node (v4) [circle,draw] at (0,-1) {\huge $3$};
\node (v5) [circle,draw] at (2,-1) {\huge $1$};
\draw (v0) edge (v1);
\draw (v0) edge (v2);
\draw (v1) edge (v3);
\draw (v1) edge (v4);
\draw (v2) edge (v4);
\draw (v2) edge (v5);
\end{tikzpicture}
+
\begin{tikzpicture}[baseline={([yshift=-0.5ex]current bounding box.center)},scale=\scaleValue,transform shape]
\node (v0) [circle,draw,minimum size = \minRadius] at (0+10,1.25) {};
\node (v1) [circle,draw,fill] at (-1+10,0) {};
\node (v2) [circle,draw,fill] at (1+10,0) {};
\node (v3) [circle,draw] at (-2+10,-1) {\huge $2$};
\node (v4) [circle,draw] at (0+10,-1) {\huge $3$};
\node (v5) [circle,draw] at (2+10,-1) {\huge $1$};
\draw (v0) edge (v1);
\draw (v0) edge (v2);
\draw (v1) edge (v3);
\draw (v1) edge (v5);
\draw (v2) edge (v4);
\draw (v2) edge (v5);
\end{tikzpicture}.
\]


If $d$ is odd we observe that


\[
\begin{tikzpicture}[baseline={([yshift=-0.5ex]current bounding box.center)},scale=\scaleValue,transform shape]
\node (v0) [circle,draw,minimum size = \minRadius] at (0+10,1.25) {};
\node (v1) [circle,draw,fill] at (-1+10,0) {};
\node (v2) [circle,draw,fill] at (1+10,0) {};
\node (v3) [circle,draw] at (-2+10,-1) {\huge $1$};
\node (v4) [circle,draw] at (0+10,-1) {\huge $2$};
\node (v5) [circle,draw] at (2+10,-1) {\huge $3$};
\draw (v0) edge (v1);
\draw (v0) edge (v2);
\draw (v1) edge (v3);
\draw (v1) edge (v5);
\draw (v2) edge (v4);
\draw (v2) edge (v5);
\end{tikzpicture}
=
-\begin{tikzpicture}[baseline={([yshift=-0.5ex]current bounding box.center)},scale=\scaleValue,transform shape]
\node (v0) [circle,draw,minimum size = \minRadius] at (0,1.25) {};
\node (v1) [circle,draw,fill] at (-1,0) {};
\node (v2) [circle,draw,fill] at (1,0) {};
\node (v3) [circle,draw] at (-2,-1) {\huge $2$};
\node (v4) [circle,draw] at (0,-1) {\huge $3$};
\node (v5) [circle,draw] at (2,-1) {\huge $1$};
\draw (v0) edge (v1);
\draw (v0) edge (v2);
\draw (v1) edge (v3);
\draw (v1) edge (v4);
\draw (v2) edge (v4);
\draw (v2) edge (v5);
\end{tikzpicture}
\]


\noi and thus for any permutation of $\{1,2,3\}$. For $d$ even we see that


\[
\begin{tikzpicture}[baseline={([yshift=-0.5ex]current bounding box.center)},scale=\scaleValue,transform shape]
\node (v0) [circle,draw,minimum size = \minRadius] at (0+10,1.25) {};
\node (v1) [circle,draw,fill] at (-1+10,0) {};
\node (v2) [circle,draw,fill] at (1+10,0) {};
\node (v3) [circle,draw] at (-2+10,-1) {\huge $1$};
\node (v4) [circle,draw] at (0+10,-1) {\huge $2$};
\node (v5) [circle,draw] at (2+10,-1) {\huge $3$};
\draw (v0) edge (v1);
\draw (v0) edge (v2);
\draw (v1) edge (v3);
\draw (v1) edge (v5);
\draw (v2) edge (v4);
\draw (v2) edge (v5);
\node at (8.5,0) {\Large $1$};
\node at (11.5,0) {\Large $2$};
\end{tikzpicture}
=-\begin{tikzpicture}[baseline={([yshift=-0.5ex]current bounding box.center)},scale=\scaleValue,transform shape]
\node (v0) [circle,draw,minimum size = \minRadius] at (0,1.25) {};
\node (v1) [circle,draw,fill] at (-1,0) {};
\node (v2) [circle,draw,fill] at (1,0) {};
\node (v3) [circle,draw] at (-2,-1) {\huge $2$};
\node (v4) [circle,draw] at (0,-1) {\huge $3$};
\node (v5) [circle,draw] at (2,-1) {\huge $1$};
\draw (v0) edge (v1);
\draw (v0) edge (v2);
\draw (v1) edge (v3);
\draw (v1) edge (v4);
\draw (v2) edge (v4);
\draw (v2) edge (v5);
\node at (-1.5,0) {\Large $1$};
\node at (1.5,0) {\Large $2$};
\end{tikzpicture}
\]


In any case we see that the Jacobi identity is mapped to zero.

\end{proof}

\begin{rem}

The map 


\ba{rcc}
\assOp & \longrightarrow & \oFun(\assOp) \\[5pt]
\begin{tikzpicture}[baseline={([yshift=-0.5ex]current bounding box.center)},scale=\scaleValue,transform shape]
\node (v0) at (0,1) {};
\node (v1) [circle,draw] at (0,0) {};
\node (v2) at (-1,-1) {\huge $1$};
\node (v3) at (1,-1) {\huge $2$};
\draw (v0) edge (v1);
\draw (v1) edge (v2);
\draw (v1) edge (v3);
\end{tikzpicture} & \longmapsto & \begin{tikzpicture}[baseline={([yshift=-0.5ex]current bounding box.center)},scale=\scaleValue,transform shape]
\node (v0) [circle,draw,minimum size = \minRadius] at (0,1) {};
\node (v1) [circle,draw] at (0,0) {};
\node (v2) [circle,draw] at (-1,-1) {\huge $1$};
\node (v3) [circle,draw] at (1,-1) {\huge $2$};
\draw (v0) edge (v1);
\draw (v1) edge (v2);
\draw (v1) edge (v3);
\end{tikzpicture}
\ea


\noi is not a morphism of operads as the associativity condition is not mapped to zero. Indeed the associativity condition would require that the following relation vanishes,


\ba{rl}
\begin{tikzpicture}[baseline={([yshift=-0.66ex]current bounding box.center)},scale=\scaleValue,transform shape]
\node (v0) [circle,draw,minimum size = \minRadius] at (0,1.25) {};
\node (v1) [circle,draw] at (0,0) {};
\node (v2) [circle,draw] at (-1,-1) {};
\node (v3) [circle,draw] at (1,-1) {\huge $3$};
\node (v4) [circle,draw] at (-2,-2) {\huge $1$};
\node (v5) [circle,draw] at (0,-2) {\huge $2$};
\draw (v0) edge (v1);
\draw (v1) edge (v2);
\draw (v1) edge (v3);
\draw (v2) edge (v4);
\draw (v2) edge (v5);
\end{tikzpicture}
+
\begin{tikzpicture}[baseline={([yshift=-0.5ex]current bounding box.center)},scale=\scaleValue,transform shape]
\node (v0) [circle,draw,minimum size = \minRadius] at (0,1.25) {};
\node (v1) [circle,draw] at (-1,0) {};
\node (v2) [circle,draw] at (1,0) {};
\node (v3) [circle,draw] at (-2,-1) {\huge $1$};
\node (v4) [circle,draw] at (0,-1) {\huge $2$};
\node (v5) [circle,draw] at (2,-1) {\huge $3$};
\draw (v0) edge (v1);
\draw (v0) edge (v2);
\draw (v1) edge (v3);
\draw (v1) edge (v4);
\draw (v2) edge (v4);
\draw (v2) edge (v5);
\end{tikzpicture}
+
\begin{tikzpicture}[baseline={([yshift=-0.5ex]current bounding box.center)},scale=\scaleValue,transform shape]
\node (v0) [circle,draw,minimum size = \minRadius] at (0,1.25) {};
\node (v1) [circle,draw] at (-1,0) {};
\node (v2) [circle,draw] at (1,0) {};
\node (v3) [circle,draw] at (-2,-1) {\huge $1$};
\node (v4) [circle,draw] at (0,-1) {\huge $2$};
\node (v5) [circle,draw] at (2,-1) {\huge $3$};
\draw (v0) edge (v1);
\draw (v0) edge (v2);
\draw (v1) edge (v3);
\draw (v1) edge (v4);
\draw (v2) edge (v3);
\draw (v2) edge (v5);
\end{tikzpicture}
-
\begin{tikzpicture}[baseline={([yshift=-0.5ex]current bounding box.center)},scale=\scaleValue,transform shape]
\node (v0) [circle,draw,minimum size = \minRadius] at (0,1.25) {};
\node (v1) [circle,draw] at (0,0) {};
\node (v2) [circle,draw] at (-1,-1) {\huge $1$};
\node (v3) [circle,draw] at (1,-1) {};
\node (v4) [circle,draw] at (0,-2) {\huge $2$};
\node (v5) [circle,draw] at (2,-2) {\huge $3$};
\draw (v0) edge (v1);
\draw (v1) edge (v2);
\draw (v1) edge (v3);
\draw (v3) edge (v4);
\draw (v3) edge (v5);
\end{tikzpicture}
-
\begin{tikzpicture}[baseline={([yshift=-0.5ex]current bounding box.center)},scale=\scaleValue,transform shape]
\node (v0) [circle,draw,minimum size = \minRadius] at (0,1.25) {};
\node (v1) [circle,draw] at (-1,0) {};
\node (v2) [circle,draw] at (1,0) {};
\node (v3) [circle,draw] at (-2,-1) {\huge $1$};
\node (v4) [circle,draw] at (0,-1) {\huge $2$};
\node (v5) [circle,draw] at (2,-1) {\huge $3$};
\draw (v0) edge (v1);
\draw (v0) edge (v2);
\draw (v1) edge (v3);
\draw (v1) edge (v4);
\draw (v2) edge (v4);
\draw (v2) edge (v5);
\end{tikzpicture}
-
\begin{tikzpicture}[baseline={([yshift=-0.5ex]current bounding box.center)},scale=\scaleValue,transform shape]
\node (v0) [circle,draw,minimum size = \minRadius] at (0,1.25) {};
\node (v1) [circle,draw] at (-1,0) {};
\node (v2) [circle,draw] at (1,0) {};
\node (v3) [circle,draw] at (-2,-1) {\huge $1$};
\node (v4) [circle,draw] at (0,-1) {\huge $2$};
\node (v5) [circle,draw] at (2,-1) {\huge $3$};
\draw (v0) edge (v1);
\draw (v0) edge (v2);
\draw (v1) edge (v3);
\draw (v1) edge (v5);
\draw (v2) edge (v4);
\draw (v2) edge (v5);
\end{tikzpicture} & \\[20pt]
=&
\begin{tikzpicture}[baseline={([yshift=-0.5ex]current bounding box.center)},scale=\scaleValue,transform shape]
\node (v0) [circle,draw,minimum size = \minRadius] at (0,1.25) {};
\node (v1) [circle,draw] at (-1,0) {};
\node (v2) [circle,draw] at (1,0) {};
\node (v3) [circle,draw] at (-2,-1) {\huge $1$};
\node (v4) [circle,draw] at (0,-1) {\huge $2$};
\node (v5) [circle,draw] at (2,-1) {\huge $3$};
\draw (v0) edge (v1);
\draw (v0) edge (v2);
\draw (v1) edge (v3);
\draw (v1) edge (v4);
\draw (v2) edge (v3);
\draw (v2) edge (v5);
\end{tikzpicture}
 -
\begin{tikzpicture}[baseline={([yshift=-0.5ex]current bounding box.center)},scale=\scaleValue,transform shape]
\node (v0) [circle,draw,minimum size = \minRadius] at (0,1.25) {};
\node (v1) [circle,draw] at (-1,0) {};
\node (v2) [circle,draw] at (1,0) {};
\node (v3) [circle,draw] at (-2,-1) {\huge $1$};
\node (v4) [circle,draw] at (0,-1) {\huge $2$};
\node (v5) [circle,draw] at (2,-1) {\huge $3$};
\draw (v0) edge (v1);
\draw (v0) edge (v2);
\draw (v1) edge (v3);
\draw (v1) edge (v5);
\draw (v2) edge (v4);
\draw (v2) edge (v5);
\end{tikzpicture}
\ea


\noi which is not the case.


\end{rem}

\end{section}

\begin{section}{Deformation complexes}

We follow \cite{MR2572248} to introduce deformation complexes. We consider a  morphism of operads $f:\LieOp_d \rightarrow \mathcal{P}$ and define the associated deformation complex by


\ba{lllll}
\deform(\LieOp_d \stackrel{f}{\ra} \mcl{P}) & = & \deform(\hoLie_d \stackrel{\bar{f}}{\ra} \mcl{P}) & = & \prod_{n \geq 1}{[E(n)^\ast \otimes \mcl{P}(n)]^{\mbb{S}_n}[-1]} \\
& & & = & \prod_{n \geq 1}{[\text{sgn}_n^{\otimes |d|} \otimes \mcl{P}(n)]^{\mbb{S}_n}[d-dn]}.
\ea


The map $\bar{f}:\hoLie_d \rightarrow \LieOp_d \stackrel{f}{\ra} \mcl{P}$ induces a differential as follows: an element $F \in [\text{sgn}_n^{\otimes |d|} \otimes \mcl{P}(n)]^{\mbb{S}_n}$ can be interpreted as the image of 

\[
\begin{tikzpicture}[baseline={([yshift=-0.5ex]current bounding box.center)},scale=\scaleValue,transform shape]
\node  [circle,draw,fill] (v1) at (0,0) {};
\node (v2) at (0,1) {};
\node (v4) at (-1,-1) {\huge $2$};
\node (v3) at (-2,-1) {\huge $1$};
\node (v5) at (1,-1) {};
\node (v6) at (2,-1) {\huge $n$};
\node at (0,-1) {\huge $\cdots$};
\draw  (v1) edge (v2);
\draw  (v1) edge (v3);
\draw  (v1) edge (v4);
\draw  (v1) edge (v5);
\draw  (v1) edge (v6);
\end{tikzpicture}
\]


\noi under a derivation $F:\hoLie_d \ra \mcl{P}$. The differential is given by


\[
\delta F = \sum_{n=n'+n''}{\includegraphics[scale=0.5,valign=c]{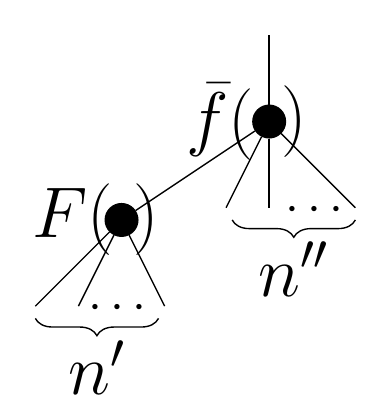}-(-1)^{|F|}\includegraphics[scale=0.5,valign=c]{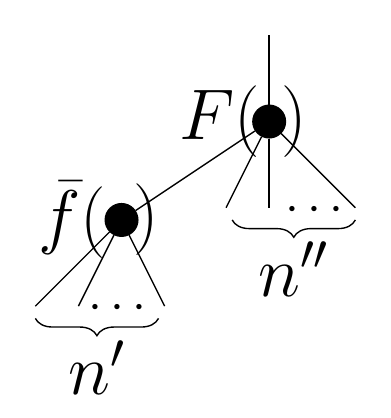}}.
\]


We recall the following facts:


\begin{fact}\cite{MR3348138}

\[
H(\mathbf{Def}(\LieOp_d \rightarrow \LieOp_d))=\mathbb{K}\langle\begin{tikzpicture}[baseline={([yshift=-0.5ex]current bounding box.center)},scale=\scaleValue,transform shape]
\node (v0) at (0,1) {};
\node (v1) [circle,draw,fill] at (0,0) {};
\node (v2) at (-1,-1) {\huge $1$};
\node (v3) at (1,-1) {\huge $2$};
\draw (v0) edge (v1);
\draw (v1) edge (v2);
\draw (v1) edge (v3);
\end{tikzpicture}\rangle.
\]

\end{fact}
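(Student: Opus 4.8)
\emph{Proof proposal.} The plan is to show that the complex $\deform(\LieOp_d \ra \LieOp_d)$ (for the identity morphism $f=\mathrm{id}$) is \emph{already} one-dimensional, so that it coincides with its own cohomology and no further homological input is required.

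First I would unwind the explicit description recalled above. Since the space of cogenerators $E$ of $\hoLie_d$ satisfies $E(1)=0$, only the terms with $n\geq 2$ contribute, so
\[
\deform(\LieOp_d \ra \LieOp_d)\;=\;\prod_{n\geq 2}\big[\,\text{sgn}_n^{\otimes|d|}\otimes\LieOp_d(n)\,\big]^{\mbb{S}_n}[d-dn].
\]
Using the operadic (de)suspension identification $\LieOp_d(n)\cong\LieOp(n)\otimes\text{sgn}_n^{\otimes(d-1)}$ as $\mbb{S}_n$-modules (tracking the overall internal degree shift separately) together with the parity count $|d|+(d-1)\equiv 2d-1\equiv 1\pmod 2$, one obtains an isomorphism of $\mbb{S}_n$-modules $\text{sgn}_n^{\otimes|d|}\otimes\LieOp_d(n)\cong\LieOp(n)\otimes\text{sgn}_n$. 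Hence the arity-$n$ summand has dimension $\dim\big(\LieOp(n)\otimes\text{sgn}_n\big)^{\mbb{S}_n}=\dim\mathrm{Hom}_{\mbb{S}_n}(\text{sgn}_n,\LieOp(n))$, i.e. the multiplicity of the sign representation inside $\LieOp(n)$.

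The crux is then the classical fact that $\text{sgn}_n$ occurs in $\LieOp(n)$ with multiplicity $1$ for $n\leq 2$ and with multiplicity $0$ for $n\geq 3$. I would deduce it from the standard identification $\LieOp(n)\cong\mathrm{Ind}_{C_n}^{\mbb{S}_n}\zeta$, where $C_n\subset\mbb{S}_n$ is the cyclic subgroup generated by an $n$-cycle and $\zeta$ a faithful character of $C_n$: by Frobenius reciprocity the multiplicity equals $\big\langle\mathrm{Res}_{C_n}\text{sgn}_n,\zeta\big\rangle_{C_n}$, and $\mathrm{Res}_{C_n}\text{sgn}_n$ sends an $n$-cycle to $(-1)^{n-1}$, so it is a character of $C_n$ of order $1$ (for $n$ odd) or $2$ (for $n$ even); it agrees with a faithful character of $C_n$ only when $n\in\{1,2\}$. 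For a completely elementary alternative one checks $n=2$ ($\LieOp(2)=\text{sgn}_2$) and $n=3$ ($\LieOp(3)$ is the standard $\mbb{S}_3$-representation, with vanishing sign-isotypic part) by hand, and invokes the induced-representation computation only for $n\geq 4$.

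Combining these observations, $\deform(\LieOp_d \ra \LieOp_d)$ is concentrated in arity $2$, where it is one-dimensional and spanned by the image of the generating corolla (the degree $d$ shifted bracket). Such a complex can carry no nonzero differential: the differential of the deformation complex is induced by insertion of the binary corolla and therefore raises arity by exactly one, landing in the vanishing arity-$3$ component, while nothing maps into arity $2$. Hence the complex agrees with its cohomology, which is therefore the one-dimensional space spanned by the generating corolla, as asserted. The single point requiring genuine care is the sign and degree bookkeeping in the $\mbb{S}_n$-module identification, together with the multiplicity computation for the sign representation in $\LieOp(n)$; everything else is formal.
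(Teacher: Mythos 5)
Your argument is correct, and since the paper offers no proof of this statement at all (it is quoted as a Fact from Willwacher's paper), what you give is a legitimate self-contained verification rather than a deviation from an internal argument. The key steps all check out: $\LieOp_d(n)\cong\LieOp(n)\otimes\mathrm{sgn}_n^{\otimes(d-1)}$ up to a degree shift, so for every $d$ the arity-$n$ component of $\deform(\LieOp_d\to\LieOp_d)$ computes the multiplicity of the sign representation in $\LieOp(n)$; by the classical description $\LieOp(n)\cong\mathrm{Ind}_{C_n}^{\mbb{S}_n}\zeta$ with $\zeta$ a primitive (``faithful'') character and Frobenius reciprocity, this multiplicity is $1$ for $n=2$ and $0$ for $n\geq 3$; and you rightly note that the arity-$1$ term is absent because $\hoLie_d$ has no unary generators (the paper's formula should really read $\prod_{n\geq 2}$ — this matters, since an extra arity-$1$ element would kill the class, exactly as happens in the larger complex $C^{\leq 1}$ later in the paper). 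Since the differential inserts a single binary corolla and hence raises arity by exactly one, a complex concentrated in arity $2$ carries zero differential and coincides with its cohomology, spanned by the class of the generating corolla, as asserted. Two minor caveats: the argument uses characteristic zero (invariants compute isotypic multiplicities, and the induced-representation model of $\LieOp(n)$ is a characteristic-zero statement), which is the setting of the paper; and the degree bookkeeping you defer only affects the degree in which the one-dimensional class sits, not the dimension count, so it is harmless here.
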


\begin{fact}\cite{MR2572248}

Let $f:\mcl{P} \ra \mcl{Q}_1$, $s:\mcl{Q}_1 \ra \mcl{Q}_2$ be morphism of dg operads such that $s$ is a quasi-isomorphism. Then there is a quasi-isomorphism


\[
\deform(\mcl{P} \stackrel{f}{\ra} \mcl{Q}_1) \cong \deform(\mcl{P} \stackrel{sf}{\ra} \mcl{Q}_2).
\]

\end{fact}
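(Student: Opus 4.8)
The plan is to exhibit the comparison map explicitly and then run a spectral-sequence argument for the filtration by arity. Writing $\mcl{P}_\infty = (\mbf{Free}(\mcl{E}),d)$ for a cofibrant (minimal) resolution of $\mcl{P}$, the deformation complex $\deform(\mcl{P} \stackrel{f}{\ra} \mcl{Q})$ is, just as for $\LieOp_d$ recalled above, the complex of derivations $\mathrm{Der}(\mcl{P}_\infty,\mcl{Q}) = \prod_{n}[\mcl{E}(n)^\ast \otimes \mcl{Q}(n)]^{\mbb{S}_n}[-1]$ (a derivation being determined by its restriction to the generators $\mcl{E}$), equipped with the differential $\delta$ assembled from $\mathrm{id}\otimes d_{\mcl{Q}}$, from pre-composition with $d$, and from the bracket with the Maurer--Cartan element $m\in\prod_n\mcl{Q}(n)$ coming from $\mcl{P}_\infty\to\mcl{P}\stackrel{f}{\ra}\mcl{Q}$. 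The candidate quasi-isomorphism is post-composition with $s$, namely $\varphi \mapsto s\circ\varphi$; since $s$ commutes with the operadic compositions and with the differentials, it commutes with every term of $\delta$ and carries the Maurer--Cartan element of $\mcl{Q}_1$ to that of $\mcl{Q}_2$, so this is a chain map $s_\ast$. Everything then reduces to showing $s_\ast$ induces an isomorphism on cohomology.

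To this end I would filter both deformation complexes by arity: $F^p := \prod_{n\geq p}[\mcl{E}(n)^\ast\otimes\mcl{Q}(n)]^{\mbb{S}_n}[-1]$, the derivations vanishing on $\mcl{E}(n)$ for $n<p$. Each term of $\delta$ either preserves the arity ($\mathrm{id}\otimes d_\mcl{Q}$ and pre-composition with $d$) or strictly increases it (bracket with $m$, which is supported in arities $\geq 2$), so $F^\bullet$ is a decreasing filtration by subcomplexes, preserved by $s_\ast$; it is exhaustive, and it is complete and Hausdorff because $\deform/F^p$ is the finite product $\prod_{n<p}$ and $\deform=\varprojlim_p\deform/F^p$. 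On the associated graded piece $F^p/F^{p+1}\cong[\mcl{E}(p)^\ast\otimes\mcl{Q}(p)]^{\mbb{S}_p}[-1]$ the induced differential is exactly $\mathrm{id}\otimes d_\mcl{Q}$: the arity-raising contributions vanish on the graded object, and the arity-preserving pre-composition term only feeds on the components of the derivation in arities $<p$, which are zero on representatives of classes in $F^p/F^{p+1}$.

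On the $E_1$-page one then computes, using that $\mbb{K}\supseteq\mbb{Q}$ makes $(-)^{\mbb{S}_p}$ exact and that $\mcl{E}(p)^\ast$ is an ordinary finite-dimensional graded vector space, that $E_1^p(\mcl{Q}) = [\mcl{E}(p)^\ast\otimes H(\mcl{Q}(p))]^{\mbb{S}_p}[-1]$. Since $s$ is a quasi-isomorphism, $H(\mcl{Q}_1(p))\to H(\mcl{Q}_2(p))$ is an isomorphism of $\mbb{S}_p$-modules for all $p$, and applying the exact functor $[\mcl{E}(p)^\ast\otimes(-)]^{\mbb{S}_p}$ shows $s_\ast$ is an isomorphism on $E_1$. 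A standard comparison theorem for filtered complexes then promotes this to an isomorphism of abutments --- i.e. $s_\ast$ is a quasi-isomorphism --- provided the spectral sequences converge, which they do once one checks that in each fixed cohomological degree only finitely many arities contribute (the arity-$p$ summand of $\deform$ sits in internal degrees that are shifted upward as $p$ grows, a property that holds for the operads occurring in this paper).

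The step I expect to be the real obstacle is precisely this last convergence point: because the deformation complex is a direct \emph{product} (a completion) rather than a direct sum, one cannot invoke a finite-length spectral sequence and must argue carefully that the arity filtration is complete and exhaustive and that the associated spectral sequence is regular (degreewise bounded). Once convergence is secured, the $E_1$-comparison does all the work, and the remaining verifications --- that $s_\ast$ is a chain map, that $F^\bullet$ is $\delta$-stable, and the identification of the associated graded differential with $\mathrm{id}\otimes d_\mcl{Q}$ --- are routine.
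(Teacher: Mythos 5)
The paper does not prove this statement at all: it is quoted as a \emph{Fact} with a citation to the literature, so there is no in-paper argument to compare yours against, and your proposal has to be judged on its own merits. On those merits it is essentially the standard proof from the cited source and is correct. Post-composition with $s$ is indeed a map of filtered complexes for the arity filtration, the associated graded differential is $\mathrm{id}\otimes d_{\mcl{Q}}$, and the $E_1$-comparison follows from exactness of $\mbb{S}_p$-invariants in characteristic zero. Two remarks. First, a small misclassification: pre-composition with the differential $d$ of $\mcl{P}_\infty$ is not arity-preserving on the derivation complex; since $d$ of an arity-$n$ generator is built from generators of arities strictly less than $n$, this term reads off the components of a derivation in arities $<n$ and writes into arity $n$, so it strictly raises the filtration degree just like the bracket with $m$. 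Your subsequent sentence handles it correctly anyway, so the identification of the graded differential stands. Second, and more substantively, the convergence issue you flag is real but your fix (degreewise boundedness of contributing arities) is weaker than necessary and only covers the operads of this paper, whereas the Fact is stated in general. The cleaner route uses only that the filtration is exhaustive, Hausdorff and complete: each quotient $\deform/F^p$ carries a finite filtration, so induction on $p$ shows each map $\deform(\mcl{P}\ra\mcl{Q}_1)/F^p \ra \deform(\mcl{P}\ra\mcl{Q}_2)/F^p$ is a quasi-isomorphism; since the towers of cohomology groups are then levelwise isomorphic, they have isomorphic $\varprojlim$ and $\varprojlim^1$, and the five lemma applied to the two Milnor sequences for $\deform=\varprojlim_p \deform/F^p$ gives the isomorphism on cohomology of the full products with no boundedness hypothesis. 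With that substitution your argument proves the Fact in the stated generality.
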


Combining these two facts we see that $H(\deform(\hoLie_d \ra \hoLie_d))$ is one-dimensional.

\sk

Now consider for $\lambda \in \mbb{K}^\ast$ the map $\phi_\lambda$, defined by


\[
\phi_\lambda(\begin{tikzpicture}[baseline={([yshift=-0.5ex]current bounding box.center)},scale=\scaleValue,transform shape]
\node [circle,draw,fill] (v1) at (0,0) {};
\node (v2) at (-1,-1.5) {\huge $1$};
\node (v3) at (1,-1.5) {\huge $n$};
\node at (0.055,-1) {\huge $\cdots$};
\draw  (v1) edge (v2);
\draw  (v1) edge (v3);
\node (v4) at (0,1.25) {};
\draw  (v1) edge (v4);
\end{tikzpicture}
):=\lambda^{n-1}\begin{tikzpicture}[baseline={([yshift=-0.5ex]current bounding box.center)},scale=\scaleValue,transform shape]
\node [circle,draw,fill] (v1) at (0,0) {};
\node (v2) at (-1,-1.5) {\huge $1$};
\node (v3) at (1,-1.5) {\huge $n$};
\node at (0.055,-1) {\huge $\cdots$};
\draw  (v1) edge (v2);
\draw  (v1) edge (v3);
\node (v4) at (0,1.25) {};
\draw  (v1) edge (v4);
\end{tikzpicture}
.
\]


This is an automorphism of $\hoLie_d$. Compute


\[
\frac{d\phi_\lambda}{d \lambda}\Bigr|_{\lambda=1}(\begin{tikzpicture}[baseline={([yshift=-0.5ex]current bounding box.center)},scale=\scaleValue,transform shape]
\node [circle,draw,fill] (v1) at (0,0) {};
\node (v2) at (-1,-1.5) {\huge $1$};
\node (v3) at (1,-1.5) {\huge $n$};
\node at (0.055,-1) {\huge $\cdots$};
\draw  (v1) edge (v2);
\draw  (v1) edge (v3);
\node (v4) at (0,1.25) {};
\draw  (v1) edge (v4);
\end{tikzpicture})=(n-1)\begin{tikzpicture}[baseline={([yshift=-0.5ex]current bounding box.center)},scale=\scaleValue,transform shape]
\node [circle,draw,fill] (v1) at (0,0) {};
\node (v2) at (-1,-1.5) {\huge $1$};
\node (v3) at (1,-1.5) {\huge $n$};
\node at (0.055,-1) {\huge $\cdots$};
\draw  (v1) edge (v2);
\draw  (v1) edge (v3);
\node (v4) at (0,1.25) {};
\draw  (v1) edge (v4);
\end{tikzpicture}.
\]


This implies that $\Lambda:=\sum_{n \geq 2}{(n-1)\begin{tikzpicture}[baseline={([yshift=-0.5ex]current bounding box.center)},scale=\scaleValue,transform shape]
\node [circle,draw,fill] (v1) at (0,0) {};
\node (v2) at (-1,-1.5) {\huge $1$};
\node (v3) at (1,-1.5) {\huge $n$};
\node at (0.055,-1) {\huge $\cdots$};
\draw  (v1) edge (v2);
\draw  (v1) edge (v3);
\node (v4) at (0,1.25) {};
\draw  (v1) edge (v4);
\end{tikzpicture}} \in \deform(\hoLie_d \ra \hoLie_d)$ is a cohomology class and


\[
H^0(\deform(\hoLie_d \ra \hoLie_d))=\mbb{K}\langle \Lambda \rangle.
\]

\end{section}

We are interested in the deformation complex $\deform(\LieOp_d \stackrel{i}{\ra} \oLie{d})$ of the morphism of operads described in $(\ref{morphism_lie_olie})$. The elements of this complex can be described by formal series of elements in $\oLie{d}$ with either unlabelled white input vertices if $d$ is even or with an ordering of the input vertices up to even permutations. The differential can be described more explicitly for $\Gamma \in \deform(\LieOp_d \stackrel{i}{\ra} \oLie{d})$ by

\[
\delta \Gamma=\begin{tikzpicture}[baseline={([yshift=-0.5ex]current bounding box.center)},scale=\scaleValue,transform shape]
\node [circle,draw,fill] (v1) at (0,0) {};
\node [circle,draw,minimum size = \minRadius] (v2) at (-1,-1) {};
\node [circle,draw,minimum size = \minRadius] (v3) at (1,-1) {};
\node [circle,draw,minimum size = \minRadius] (v4) at (0,1.25) {};
\draw  (v1) edge (v2);
\draw  (v1) edge (v3);
\draw  (v1) edge (v4);
\end{tikzpicture} \, o \, \Gamma
- (-1)^{|\Gamma|} \sum_{v \in V_o(\Gamma)} \Gamma \, o_v \, \begin{tikzpicture}[baseline={([yshift=-0.5ex]current bounding box.center)},scale=\scaleValue,transform shape]
\node [circle,draw,fill] (v1) at (0,0) {};
\node [circle,draw,minimum size = \minRadius] (v2) at (-1,-1) {};
\node [circle,draw,minimum size = \minRadius] (v3) at (1,-1) {};
\node [circle,draw,minimum size = \minRadius] (v4) at (0,1.25) {};
\draw  (v1) edge (v2);
\draw  (v1) edge (v3);
\draw  (v1) edge (v4);
\end{tikzpicture},
\]

where $V_o(\Gamma)$ denotes the set of white input vertices of $\Gamma$.


\begin{section}{Kontsevich graph complexes} 

We recall some facts about the operad of graphs and graph complexes. For completeness we recall the definition of a directed graph:


\begin{defi}

A graph with hairs is a triple $\Gamma=(H(\Gamma,\sqcup,\tau)$ where

\begin{enumerate}[label=\arabic*)]

\item $H(\Gamma)$ is a finite set of \textit{halfedges},

\item $\sqcup$ is a partition of $H(\Gamma)$


\[
H(\Gamma)=\bigsqcup_{v \in V(\Gamma)}{H(v)},
\]


\noi parametrized by a set $V(\Gamma)$ called the \textit{set of vertices} of $\Gamma$. For a vertex $v$ the set $H(v)$ is called the \textit{set of halfedges attached to $v$}. The valency of a vertex $v$ is defined to be the cardinality of $H(v)$.

\item $\tau:H(\Gamma)  \ra H(\Gamma)$ is an involution. The orbits of cardinality $2$ are called the \textit{(internal) edges} and the set of edges is denoted by $E(\Gamma)$. The orbits of cardinality $1$ are called \textit{hairs} and the set of hairs is denoted by $L(\Gamma)$

\end{enumerate}

If $L(\Gamma)=\emptyset$, then $\Gamma$ is simply called a \textit{graph}. A graph $\Gamma$ is called \textit{directed} if each edge $e=(h,\tau(h))$ comes with a choice of an ordering of halfedges.

\end{defi}


We denote by $G_{v,e}$ the set of connected directed graphs with no hairs, with $v$ vertices, labelled by $\{1,\cdots,v\}$, and $e$ edges labelled by $\{1,\cdots,e\}$. We define a $\mbb{S}_v$ module


\begin{equation*}
\gra_d(v):=\left\{ \begin{array}{ll}
\bigoplus_{e \geq 0}{\mbb{K}\langle G_{v,e} \rangle \otimes_{\mbb{S}_e \ltimes (\mbb{S}_2)^e} \text{sgn}_e [e(d-1)]} & \text{if $d$ is even,} \\
\bigoplus_{e \geq 0}{\mbb{K}\langle G_{v,e} \rangle \otimes_{\mbb{S}_e \ltimes (\mbb{S}_2)^e} \text{sgn}^{\otimes e}_2 [e(d-1)]} & \text{if $d$ is odd}
\end{array} \right.
\end{equation*}


\noi where $d$ is an integer, and thus an operad $\gra_d=\{\gra_d(v)\}_{v \geq 1}$ called the operad of graphs.


If $d$ is even, elements of $\gra_d$ can be seen as undirected graphs with edges having degree $1-d$ together with an ordering of the edges up to an even permutation, while an odd permutation acts by multiplication by $-1$.


If $d$ is odd, $\gra_d$ consists of directed graphs where changing the direction of an edge yields a multiplication by $-1$.


The operadic composition $\Gamma_1 \, \text{o}_v \, \Gamma_2$ works by substituting the graph $\Gamma_2$ in the vertex $v$ of $\Gamma_1$ and by summing over all possibilities of attaching the edges of $v$ to the vertices of $\Gamma_2$.

\begin{ex}
\[
\begin{tikzpicture}[baseline={([yshift=-0.5ex]current bounding box.center)},scale=\scaleValue,transform shape]
\node (v0) [circle,draw,fill,label=left: \huge $1$] at (0,0)  {};
\node (v1) [circle,draw,fill,label=right: \huge $2$] at (2,0) {};
\node (v2) [circle,draw,fill,label=left: \huge $3$] at (0,-2) {};
\draw (v0) edge (v1);
\draw (v0) edge (v2);
\end{tikzpicture}
\, o_1 \,
\begin{tikzpicture}[baseline={([yshift=-0.5ex]current bounding box.center)},scale=\scaleValue,transform shape]
\node (v0) [circle,draw,fill,label=left: \huge $1$] at (0,0) {};
\node (v1) [circle,draw,fill,label=right: \huge $2$] at (2,0) {};
\draw (v0) edge (v1);
\end{tikzpicture}
=
\begin{tikzpicture}[baseline={([yshift=-0.5ex]current bounding box.center)},scale=\scaleValue,transform shape]
\node (v1) [circle,draw,fill,label=left: \huge $1$] at (0,0) {};
\node (v2) [circle,draw,fill,label=right: \huge $2$] at (2,0) {};
\node (v3) [circle,draw,fill,label=left: \huge $3$] at (1,1) {};
\node (v4) [circle,draw,fill,label=left: \huge $4$] at (1,-1) {};
\draw (v2) edge (v1);
\draw (v3) edge (v1);
\draw (v4) edge (v1);
\end{tikzpicture}
+
\begin{tikzpicture}[baseline={([yshift=-0.5ex]current bounding box.center)},scale=\scaleValue,transform shape]
\node (v1) [circle,draw,fill,label=left: \huge $1$] at (0,0) {};
\node (v2) [circle,draw,fill,label=right: \huge $2$] at (2,0) {};
\node (v3) [circle,draw,fill,label=left: \huge $3$] at (1,1) {};
\node (v4) [circle,draw,fill,label=left: \huge $4$] at (1,-1) {};
\draw (v2) edge (v1);
\draw (v3) edge (v2);
\draw (v4) edge (v1);
\end{tikzpicture}
+
\begin{tikzpicture}[baseline={([yshift=-0.5ex]current bounding box.center)},scale=\scaleValue,transform shape]
\node (v1) [circle,draw,fill,label=left: \huge $1$] at (0,0) {};
\node (v2) [circle,draw,fill,label=right: \huge $2$] at (2,0) {};
\node (v3) [circle,draw,fill,label=left: \huge $3$] at (1,1) {};
\node (v4) [circle,draw,fill,label=left: \huge $4$] at (1,-1) {};
\draw (v2) edge (v1);
\draw (v3) edge (v1);
\draw (v4) edge (v2);
\end{tikzpicture}
+
\begin{tikzpicture}[baseline={([yshift=-0.5ex]current bounding box.center)},scale=\scaleValue,transform shape]
\node (v1) [circle,draw,fill,label=left: \huge $1$] at (0,0) {};
\node (v2) [circle,draw,fill,label=right: \huge $2$] at (2,0) {};
\node (v3) [circle,draw,fill,label=left: \huge $3$] at (1,1) {};
\node (v4) [circle,draw,fill,label=left: \huge $4$] at (1,-1) {};
\draw (v2) edge (v1);
\draw (v3) edge (v2);
\draw (v4) edge (v2);
\end{tikzpicture}
\]
\end{ex}


There is a morphism of operads \cite{MR3348138}


\[
\LieOp_d \lra \gra_d
\]


\noi by mapping


\begin{equation*}
\begin{tikzpicture}[baseline={([yshift=-0.5ex]current bounding box.center)},scale=\scaleValue,transform shape]
\node (v0) at (0,1) {};
\node (v1) [circle,draw,fill] at (0,0) {};
\node (v2) at (-1,-1) {\huge $1$};
\node (v3) at (1,-1) {\huge $2$};
\draw (v1) edge (v0);
\draw (v1) edge (v2);
\draw (v1) edge (v3);
\end{tikzpicture}
\lma \left\{ \begin{array}{cl}
\begin{tikzpicture}[baseline={([yshift=-0.5ex]current bounding box.center)},scale=\scaleValue,transform shape]
\node (v0) [draw,fill,circle,label=above:\huge $1$] at (0,0) {};
\node (v1) [draw,fill,circle,label=above:\huge $2$] at (2,0) {};
\draw [->] (v0) edge (v1);
\end{tikzpicture} & \text{if $d$ is odd} \\
\begin{tikzpicture}[baseline={([yshift=-0.5ex]current bounding box.center)},scale=\scaleValue,transform shape]
\node (v0) [draw,fill,circle,label=above:\huge $1$] at (0,0) {};
\node (v1) [draw,fill,circle,label=above:\huge $2$] at (2,0) {};
\draw (v0) edge (v1);
\end{tikzpicture} & \text{if $d$ is even.}
\end{array} \right. 
\end{equation*}


Thus we can define the \textit{full graph complex} as the deformation complex


\[
\fgc_d:=\deform(\LieOp_d \lra \gra_d).
\]


Elements of $\fgc_d$ can be interpreted as connected graphs together with an ordering of the edges, up to an even permutation, when $d$ is even. For $d$ odd, we have an ordering of the vertices, up to even permutations, together with a choice on the orientation of each edge, up to a flip yielding a multiplication by $-1$.


Its differential can be represented by

\[
\delta(\Gamma)=-2\sum_{v \in V(\Gamma)}{\begin{tikzpicture}[baseline={([yshift=-0.5ex]current bounding box.center)},scale=\scaleValue,transform shape]
\node (v1) [circle,draw,fill] at (0,0.5) {};
\node (v2) [circle,draw] at (0,-1) {\huge $\Gamma$};
\draw (v1) edge (v2);
\end{tikzpicture}}+\sum_{v \in V_o(\Gamma)}{\Gamma \, \text{o}_v \, \begin{tikzpicture}[baseline={([yshift=-0.5ex]current bounding box.center)},scale=\scaleValue,transform shape]
\node (v1) [circle,draw,fill] at (0,0) {};
\node (v2) [circle,draw,fill] at (2,0) {};
\draw (v1) edge (v2);
\end{tikzpicture}},
\]


\noi where the first term is given by summing over all attachments of a univalent vertex to a vertex of $\Gamma$.


We consider the subcomplex $\gc_d \subset \fgc_d$ spanned by connected graphs with vertices having valency at least $3$. Let $(\gc_d^2:=\bigoplus_{\substack{p \geq 1 \\ p \equiv 2d+1 mod 4}}{\mbb{K}[d-p]},0)$. It has been shown \cite{MR3348138} that the natural projection $\fgc_d \ra \gc_d^{\geq 2}:=\gc_d \oplus \gc_d^2$is a quasi-isomorphism, i.e. $H(\gc^{\geq 2}_d)=H(\fgc_d)$. In addition we recall the following theorem concerning the cohomology of $\gc_d$.


\begin{thm}{\bf{T. Willwacher},\cite{MR3348138}}

For $d=2$ one has


\[
H^0(\gc_2)=\grt_1,
\]


\noi where $\grt_1$ denotes the Grothendieck-Teichmüller Lie algebra (see below).

\end{thm}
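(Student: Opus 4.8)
The plan is to follow Willwacher's strategy and exhibit mutually inverse morphisms of Lie algebras between $\grt_1$ and $H^0(\gc_2)$, the bracket on the latter being the Lie bracket carried by the graph complex (recalling that $\gc_2 \subset \fgc_2 = \deform(\LieOp_2 \to \gra_2)$ inherits such a structure from the deformation complex). The easier direction produces a map $\grt_1 \to H^0(\gc_2)$ out of the known action of the Grothendieck--Teichmüller group on little discs: by Tamarkin's formality the Gerstenhaber operad $\mathbf{e}_2 = H_\bullet(\mathbf{D}_2)$ models the chains on the little $2$-discs operad, and a Drinfeld associator realises $\GRT$ inside the pro-unipotent group of homotopy automorphisms of $\mathbf{e}_2$ acting trivially on cohomology. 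Kontsevich's graphical resolution $\mathbf{Graphs}_2 \xrightarrow{\ \sim\ } \mathbf{e}_2$ presents the operadic deformation complex $\deform(\mathbf{hoe}_2 \to \mathbf{e}_2)$ (where $\mathbf{hoe}_2$ is the minimal resolution of $\mathbf{e}_2$) as a complex of two-coloured graphs, and differentiating the $\GRT$-action at the identity yields a Lie algebra map $\grt_1 \to H^0\big(\deform(\mathbf{hoe}_2 \to \mathbf{e}_2)\big)$ into the degree-$0$ homotopy derivations of $\mathbf{e}_2$.

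The second step identifies these degree-$0$ homotopy derivations with $H^0(\gc_2)$. Working in the graphical model, $\deform(\mathbf{hoe}_2 \to \mathbf{Graphs}_2)$ is spanned by connected graphs with ``internal'' and ``external'' vertices; filtering by the number of external vertices (equivalently by loop order) produces a spectral sequence on whose first page the differential contracting internal edges is acyclic except on graphs with a single external vertex. On that surviving piece the complex collapses, after removing the contractible bivalent-vertex and tadpole subcomplexes exactly as in the reduction $\fgc_2 \to \gc_2^{\geq 2}$ recalled above, to $\gc_2$ together with the loop classes $\gc_2^2$ and a few low-valence classes, all of which sit in cohomological degrees $\neq 0$. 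Hence $H^0\big(\deform(\mathbf{hoe}_2 \to \mathbf{e}_2)\big) \cong H^0(\gc_2)$, so the composite $\grt_1 \to H^0(\gc_2)$ is well defined; one then checks on the generators $\sigma_{2j+1}$ of $\grt_1$, whose images are explicit graphs (the tetrahedron for $\sigma_3$), that the Ihara bracket is carried to the graph bracket.

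The heart of the matter is to show this morphism is an isomorphism. Injectivity is graph-faithfulness of the $\GRT$-action; surjectivity says every degree-$0$ graph cocycle integrates to a genuine homotopy automorphism of $\mathbf{e}_2$ and therefore lies in $\GRT$. For this one invokes the theory of Drinfeld associators together with Fresse's description of the rational homotopy type of $\mathbf{D}_2$: the group of homotopy automorphisms of $\mathbf{e}_2$ acting trivially on cohomology is, via Tamarkin's formality and Furusho's theorem that the pentagon relation already cuts out $\GRT$, precisely $\GRT$; and the obstructions to integrating a degree-$0$ cocycle up the pro-nilpotent tower of $\mathbf{e}_2$ lie in the next cohomology group of the deformation complex, which the spectral sequence of the previous step shows to be inactive in the relevant degree. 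Consequently every class of $H^0(\gc_2)$ is realised and the two maps are mutually inverse. I expect the main obstacle to be exactly the two substantial inputs used here: controlling the spectral sequence that reduces the $\mathbf{e}_2$-deformation complex to $\gc_2$ (including the precise bookkeeping of the extra loop and low-valence classes), and identifying the homotopy-automorphism group of $\mathbf{e}_2$ with $\GRT$, which rests on Tamarkin's formality, Fresse's rational homotopy theory of operads, and Furusho's pentagon theorem.
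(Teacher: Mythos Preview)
The paper does not prove this theorem; it is quoted as a result of Willwacher (the reference \cite{MR3348138}) and no argument is supplied. There is therefore no ``paper's own proof'' to compare your proposal against.

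As for your sketch itself: the overall architecture --- pass from $\gc_2$ to the deformation complex of $\mathbf{e}_2$ via Kontsevich's graphical model $\mathbf{Graphs}_2$, then identify degree-$0$ homotopy derivations of $\mathbf{e}_2$ with $\grt_1$ --- is indeed Willwacher's strategy, and your spectral-sequence step reducing the two-coloured deformation complex to $\gc_2$ is the right idea. But the surjectivity argument you propose has a real gap. You write that ``the obstructions to integrating a degree-$0$ cocycle up the pro-nilpotent tower of $\mathbf{e}_2$ lie in the next cohomology group of the deformation complex, which the spectral sequence of the previous step shows to be inactive in the relevant degree.'' Your spectral sequence does not establish this; it identifies the degree-$0$ part with $H^0(\gc_2)$, but the relevant obstruction group is $H^1$, and $H^1(\gc_2)=0$ is an open problem (it is essentially the conjecture that $\GRT$ acts freely and transitively on Drinfeld associators). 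Willwacher does not use obstruction theory here. Instead he constructs the inverse map $H^0(\gc_2)\to\grt_1$ directly: a degree-$0$ graph cocycle acts by derivations on $\mathbf{Graphs}_2$, hence on its cohomology $\mathbf{e}_2(n)\cong U(\mathfrak{t}_n)$ in each arity, and one checks combinatorially that the induced compatible family of derivations of the completed Drinfeld--Kohno Lie algebras $\hat{\mathfrak{t}}_n$ satisfies precisely the $\grt_1$ relations. The appeals to Fresse's rational homotopy theory of $\mathbf{D}_2$ and to Furusho's pentagon theorem are also off: Fresse's work postdates and partly relies on Willwacher's result, and Furusho's theorem concerns redundancy among the defining equations of $\grt_1$, not the identification of the homotopy automorphism group of $\mathbf{e}_2$.
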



We recall the definition of the Grothendieck-Teichmüller group \cite{MR3348138}.


Consider $\mbb{F}_2=\mbb{K}\langle \langle X,Y \rangle \rangle$ the completed free associative algebra generated by $X,Y$. Define a coproduct $\Delta$ by setting $\Delta X=X \otimes 1 + 1 \otimes X$ and $\Delta Y = Y \otimes 1 + 1 \otimes Y$. An element $\alpha \in \mbb{F}_2$ is called \textit{group-like} if $\Delta \alpha =\alpha \otimes \alpha$.


We also need the Drinfeld-Kohno Lie algebra $t_n$ defined by generators $t_{ij}=t_{ji}$ for $1 \leq i j \leq n$ ,$i \neq j$, which satisfy $[t_{ij},t_{ik}+t_{kj}]=0$ and $[t_{ij},t_{kl}]=0$ for any distinct $i,j,k,l$.


The Grothendieck-Teichmüller group $\GRT$ is defined to be set of group-like elements $\alpha \in \mbb{F}_2$ which satisfy the following relations


\ba{rcl}
\alpha(t_{12},t_{23}+t_{24})\alpha(t_{13}+t_{23},t_{34}) & = & \alpha(t_{23},t_{34}) \alpha(t_{13}+t_{13},t_{24}+t_{34}) \alpha(t_{12},t_{23}) \\
1 & = & \alpha(t_{13},t_{12}) \alpha(t_{13},t_{23})^{-1} \alpha(t_{12},t_{23}) \\
\alpha(x,y) &=& \alpha(y,x)^{-1}.
\ea


The group structure is given by


\[
\alpha_1(X,Y) \cdot \alpha_2(Y,X) = \alpha_1(X,Y) \alpha_2(X,\alpha_1(X,Y)^{-1}Y\alpha_1(X,Y)).
\]


The Grothendieck-Teichmüller Lie algebra $\grt_1$ is given by the elements $\alpha \in \hat{\mbb{F}_2}(X,Y)$, the completed free Lie algebra generated by $X,Y$, satisfying

\ba{rcl}
\alpha(t_{12},t_{23}+t_{24}) + \alpha(t_{13}+t_{23},t_{34}) &=& \alpha(t_{23},t_{24}) + \alpha(t_{12}+t_{13},t_{24}+t_{34}) + \alpha(t_{12},t_{23}) \\
\alpha(X,Y) + \alpha(Y,-X-Y) + \alpha(-X-Y,X) &=&0 \\
\alpha(X,Y) &=& - \alpha(Y,X)
\ea


It has been shown \cite{Rossi2014} that $H(\gc_2)$ contains the so-called \textit{wheel classes} $\mfk{w}_{2n+1}$ given by


\ba{rcl}
\mfk{w}_3 &=& \begin{tikzpicture}[baseline={([yshift=-0.5ex]current bounding box.center)},scale=\scaleValue,transform shape]
\node [circle,draw,fill] (v1) at (0.5,0.5) {};
\node [circle,draw,fill] (v3) at (-1,-1.5) {};
\node [circle,draw,fill] (v4) at (2,-1.5) {};
\node [circle,draw,fill] (v2) at (0.5,-0.7) {};
\draw  (v1) edge (v2);
\draw  (v3) edge (v2);
\draw  (v3) edge (v1);
\draw  (v3) edge (v4);
\draw  (v4) edge (v1);
\draw  (v2) edge (v4);
\end{tikzpicture} \\
\mfk{w}_5 &=& \begin{tikzpicture}[baseline={([yshift=-0.5ex]current bounding box.center)},scale=\scaleValue,transform shape]
\node [circle,draw,fill] (v2) at (0.5,1) {};
\node [circle,draw,fill] (v1) at (-1,0) {};
\node [circle,draw,fill] (v3) at (2,0) {};
\node [circle,draw,fill] (v5) at (-0.5,-1.5) {};
\node [circle,draw,fill] (v4) at (1.5,-1.5) {};
\node [circle,draw,fill] (v6) at (0.5,-0.5) {};
\draw  (v1) edge (v2);
\draw  (v2) edge (v3);
\draw  (v3) edge (v4);
\draw  (v5) edge (v4);
\draw  (v5) edge (v1);
\draw  (v6) edge (v2);
\draw  (v6) edge (v3);
\draw  (v4) edge (v6);
\draw  (v6) edge (v5);
\draw  (v6) edge (v1);
\end{tikzpicture}+\frac{5}{2}\begin{tikzpicture}[baseline={([yshift=-0.5ex]current bounding box.center)},scale=\scaleValue,transform shape]
\node [circle,draw,fill] (v2) at (0.5,1) {};
\node [circle,draw,fill] (v1) at (-1,0) {};
\node [circle,draw,fill] (v3) at (2,0) {};
\node [circle,draw,fill] (v5) at (-0.5,-1.5) {};
\node [circle,draw,fill] (v4) at (1.5,-1.5) {};
\node [circle,draw,fill] (v6) at (1,-0.5) {};
\draw  (v1) edge (v2);
\draw  (v2) edge (v3);
\draw  (v3) edge (v4);
\draw  (v5) edge (v4);
\draw  (v5) edge (v1);
\draw  (v4) edge (v1);
\draw  (v2) edge (v5);
\draw  (v5) edge (v6);
\draw  (v6) edge (v2);
\draw  (v6) edge (v3);
\end{tikzpicture}\\
\cdots &=& \cdots \\
\mfk{w}_{2n+1} &=& \begin{tikzpicture}[baseline={([yshift=-0.5ex]current bounding box.center)},scale=\scaleValue,transform shape]
\node [circle,draw,fill] (v2) at (0.5,0.5) {};
\node [circle,draw,fill] (v1) at (-1.5,-0.4) {};
\node [circle,draw,fill] (v8) at (-2,-2) {};
\node [circle,draw,fill] (v7) at (-1.5,-3.6) {};
\node [circle,draw,fill] (v6) at (0.5,-4.5) {};
\node [circle,draw,fill] (v3) at (2.5,-0.4) {};
\node  (v4) at (3,-2) {\huge $\cdots$};
\node [circle,draw,fill] (v5) at (2.5,-3.6) {};
\draw  (v1) edge (v2);
\draw  (v2) edge (v3);
\draw  (v3) edge (v4);
\draw  (v5) edge (v4);
\draw  (v5) edge (v6);
\draw  (v6) edge (v7);
\draw  (v7) edge (v8);
\draw  (v8) edge (v1);
\node [circle,draw,fill] (v9) at (0.5,-2) {};
\draw  (v2) edge (v9);
\draw  (v9) edge (v3);
\draw  (v9) edge (v4);
\draw  (v9) edge (v5);
\draw  (v6) edge (v9);
\draw  (v9) edge (v7);
\draw  (v8) edge (v9);
\draw  (v1) edge (v9);
\end{tikzpicture} + \sum_{p=4}^{2n-1}{\lambda_p \Gamma^p}
\ea

\end{section}

\begin{section}{Kontsevich graph complex from the operad of Lie algebras} \label{KonGraphComp}

There is a relation between the operad of graphs and the operad $\oLie{d}$.

\begin{lemma}

Let $I$ be the ideal generated by graphs having at least one internal edge. Then

\[
\oLieC{d}/I \cong \gra_d.
\]

\end{lemma}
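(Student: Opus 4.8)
The plan is to write down an explicit isomorphism $\Phi\colon \oLieC{d}/I \to \gra_d$ realised by ``contracting each black vertex to an edge''. The first step is to describe the quotient concretely. Since $I$ is spanned by the connected graphs containing at least one internal edge, the quotient $\oLieC{d}/I$ has a basis given by the connected graphs of $\oLieC{d}$ with no internal edge, i.e. those in which every internal irreducible component is a single binary black vertex. Such a $\Gamma \in \oLieC{d}(n)$ thus consists of the white output vertex, the white input vertices $1,\dots,n$, and some black vertices, each joined by its outgoing edge to the white output vertex and by each of its two ingoing edges to a (not necessarily distinct) white input vertex; the requirement that $\Gamma$ stay connected after deleting the white output vertex is equivalent to the graph $\bar\Gamma$ on the set $\{1,\dots,n\}$, obtained by replacing every black vertex together with its two ingoing edges by a single edge between the corresponding white input vertices, being connected.

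Then I would set $\Phi(\Gamma) := \bar\Gamma$, where for $d$ odd each such edge is directed from the white input vertex feeding the first ingoing slot of the black vertex to the one feeding the second. The inverse is the ``blow-up'' that turns every edge of a connected graph on $\{1,\dots,n\}$ into a black vertex whose outgoing edge is attached to one new white output vertex. These operations are mutually inverse on graphs, so $\Phi$ is bijective on generators; it is visibly $\mbb{S}_n$-equivariant (relabelling white input vertices is relabelling the vertices of $\gra_d$). For the decorations: a graph with $e$ black vertices goes to a graph with $e$ edges, and a binary Lie corolla has degree $1-d$, which matches the shift $[e(d-1)]$ built into $\gra_d$; for $d$ even the implicit ordering of the ingoing edges of the white output vertex is exactly the ordering of the edges of $\gra_d$ (a transposition acting by $-1$), while for $d$ odd the antisymmetry of the Lie bracket under interchanging its two ingoing slots is exactly the edge-reversal sign $-1$ of the directed graph complex, the remaining orderings (of internal vertices, and of the outgoing edges at each white input vertex) contributing no sign as all the objects involved have even total degree. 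Hence $\Phi$ is an isomorphism of dg $\mbb{S}$-modules.

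It remains to check that $\Phi$ commutes with operadic compositions. For this I would unwind the composition rule of $\oLie{d}$ modulo $I$: in $\Gamma_1 \circ_i \Gamma_2$ one deletes the $i$-th white input vertex of $\Gamma_1$ and the white output vertex of $\Gamma_2$ and sums over regluings, and the key point is that any term in which a hanging outgoing edge of $\Gamma_2$ is attached to a hanging ingoing edge of $\Gamma_1$ produces an edge between two black vertices and so vanishes in the quotient. Modulo $I$ the only surviving configuration therefore attaches every outgoing edge of a black vertex of $\Gamma_2$ to the white output vertex of $\Gamma_1$ and then distributes, in all possible ways, the ingoing slots of the black vertices of $\Gamma_1$ that used to meet the $i$-th white input vertex among the white input vertices of $\Gamma_2$. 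Under $\Phi$ this is precisely ``substitute $\Phi(\Gamma_2)$ into the vertex $i$ of $\Phi(\Gamma_1)$ and sum over all reattachments of the edges formerly incident to $i$'', which is the definition of operadic composition in $\gra_d$; compatibility with units is the degenerate case of a graph with no black vertex, which maps to the one-vertex graph. One also notes in passing that $I$ is indeed an operadic ideal, since composition never destroys an internal edge. This yields the asserted isomorphism of operads.

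The step I expect to require the most care is purely the sign bookkeeping: making the dictionary between the implicit orderings on the $\oFun(\LieOp_d)$ side (ordering of internal vertices, orderings of outgoing edges at the white input vertices, symmetry/antisymmetry of the bracket) and the orientation-plus-ordering data on the $\gra_d$ side completely rigorous for both parities of $d$, and fixing the convention for a black vertex both of whose ingoing edges land on the same white input vertex: for $d$ odd such a graph is killed by the antisymmetry of the bracket, matching the absence of tadpoles in the directed graph complex, while for $d$ even it must be compared against the chosen tadpole convention for $\gra_d$. Everything else is a direct comparison of the two definitions.
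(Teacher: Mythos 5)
Your proposal is correct and follows essentially the same route as the paper: both define the isomorphism by collapsing each binary black vertex (irreducible component) to an edge between the corresponding white input vertices, check bijectivity on generators, and verify compatibility with compositions by observing that the reglued terms creating an internal edge die in the quotient while the surviving terms reproduce the $\gra_d$-composition. Your treatment is somewhat more explicit than the paper's on the sign/orientation dictionary and on the tadpole convention, but these are refinements of the same argument rather than a different approach.
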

\begin{proof}

The operad $\oLie{d}/I$ is generated by elements $\Gamma$ of the form

\[
\begin{tikzpicture}[baseline={([yshift=-0.5ex]current bounding box.center)},scale=\scaleValue,transform shape]
\node [circle,draw,minimum size = \minRadius] (v1) at (0,1.25) {};
\node [circle,draw,fill] (v2) at (-2,0) {};
\node [circle,draw,fill] (v5) at (2,0) {};
\node [circle,draw] (v3) at (-3,-1) {\huge $1$};
\node [circle,draw] (v4) at (-1,-1) {\huge $2$};
\node [circle,draw,minimum size = 26] (v6) at (1,-1) {}; 
\node [circle,draw] (v7) at (3,-1) {\huge $n$};
\node at (0,0) {\huge $\cdots$};
\draw  (v1) edge (v2);
\draw  (v2) edge (v3);
\draw  (v2) edge (v4);
\draw  (v1) edge (v5);
\draw  (v6) edge (v5);
\draw  (v5) edge (v7);               
\end{tikzpicture}.
\]

Consider a linear map

\ba{rccc}
\alpha: & \oLieC{d} & \lra & \gra_d \\
 & \Gamma & \lma & \alpha(\Gamma),
\ea

where $\alpha(\Gamma)$ is a graph in $\gra_d$ which by definition has labelled white vertices identical to the white vertices of $\Gamma$, while the edges between vertices in $\alpha(\Gamma)$ correspond to irreducible components in $\Gamma$. If $d$ is even the ordering of edges attached to the white output vertex of $\Gamma$ induces an ordering of the edges in $\alpha(\Gamma)$. For $d$ odd, an orientation of the edges in $\alpha(\Gamma)$ is given by the ordering of the output edges attached to the white input vertices,


\[
\begin{tikzpicture}[baseline={([yshift=-0.5ex]current bounding box.center)},scale=\scaleValue,transform shape]
\node (v0) [circle,draw,minimum size = \minRadius] at (0,1.25) {};
\node (v1) [circle,draw,fill] at (0,0) {};
\node (v2) [circle,draw] at (-1,-1) {\huge $i$};
\node (v3) [circle,draw] at (1,-1) {\huge $j$};
\draw (v0) edge (v1);
\draw (v1) edge (v2);
\draw (v1) edge (v3);
\end{tikzpicture} \lra
\begin{tikzpicture}[baseline={([yshift=-0.5ex]current bounding box.center)},scale=\scaleValue,transform shape]
\node (v0) [circle,draw] at (0,0) {\huge $i$};
\node (v1) [circle,draw] at (3,0) {\huge $j$};
\draw [-stealth] (v0) edge (v1);
\end{tikzpicture}.
\]


It is clear that $\alpha$ is a bijection on the generators and it remains to show that the map respects operadic compositions. If $\Gamma_1,\Gamma_2 \in \oLieC{d}$ have no internal edges, then $\Gamma_1 \, o_i \, \Gamma_2$ is a sum of graphs with either one internal edge (which do not appear in $\oLieC{d}/I$) or an additional irreducible component. Moreover the terms with no internal edge correspond exactly under the map $\alpha$ to the operadic compositions in $\gra_d$. 

\end{proof}


This map $\alpha$ induces a map $\oLieC{d} \ra \oLieC{d}/I \ra \gra_d$ and thus a map of the deformation complexes 
\[
F:\deform(\LieOp_d \ra \oLieC{d}) \ra \fgc_d.
\]

We can now state the main theorem:


\begin{thm}

The map $F$ is a quasi-isomorphism.

\end{thm}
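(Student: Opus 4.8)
The plan is to set up a filtration on the deformation complex $\deform(\LieOp_d \stackrel{i}{\ra} \oLieC{d})$ by the number of internal edges and to show that the associated spectral sequence collapses onto $\fgc_d$. Concretely, a graph $\Gamma$ in the deformation complex has a grading by (number of internal edges) — equivalently by the total number of vertices in all internal irreducible components minus the number of such components. The piece of the differential $\delta$ described above that does \emph{not} increase this grading is precisely the one that, after applying $\alpha$, matches the $\fgc_d$-differential (attaching a univalent black vertex, which in $\gra_d$-language amounts to attaching a new vertex by a single edge, and the splitting $\Gamma \, o_v \, \begin{tikzpicture}[baseline={([yshift=-0.5ex]current bounding box.center)},scale=\scaleValue,transform shape]\node [circle,draw,fill] (v1) at (0,0) {};\node [circle,draw,fill] (v2) at (1,0) {};\draw (v1) edge (v2);\end{tikzpicture}$ at each white vertex), while the pieces that strictly increase the number of internal edges form the part of $\delta$ coming from the interior of the i.c.c.'s. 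So I would first verify carefully that $F$ is a chain map — this already uses the last Lemma (compatibility of $\alpha$ with operadic composition modulo internal edges) — and that it is filtration-preserving and induces on the associated graded $E_0$-page exactly the inclusion of the "no internal edge" part.

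Next I would analyze the $E_0$-differential $\delta_0$, namely the part of $\delta$ that creates one internal edge while fixing the set of white vertices. For a fixed combinatorial skeleton — fixed white vertices with fixed valences and fixed partition of black structure into i.c.c.'s attached to them — the complex $(\bigoplus \text{graphs}, \delta_0)$ decomposes as a tensor product, over the white vertices and over each i.c.c., of local complexes. The key computation is that the cohomology of each local complex built on a single irreducible component is one-dimensional, concentrated on the corolla with no internal edges (a single black vertex). This is exactly the statement that the $\LieOp_d$-part of the differential $\delta$ of $\hoLie_d$, restricted to trees, has trivial cohomology except in the "generator" degree — i.e. it is the Koszulness / acyclicity of the relevant bar-type complex of the operad $\LieOp_d$, or equivalently the fact that the cohomology of the deformation complex of a \emph{tree}-level structure collapses. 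I would phrase this via the well-known acyclicity of the complex computing $\oFun$ on a tree, or cite that the i.c.c.-local complex is the (reduced) cobar/bar complex of $\LieOp_d$ which is acyclic in the relevant range. The upshot: the $E_1$-page is spanned exactly by graphs with no internal edges, i.e. by $\fgc_d$, with $\delta_1$ the $\fgc_d$-differential, and then the spectral sequence degenerates at $E_1$.

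I would then package this: the spectral sequence of the filtration on $\deform(\LieOp_d \ra \oLieC{d})$ has $E_1 = H(\fgc_d)$ with the correct differential already a differential (so $E_2 = E_\infty$), and a convergence argument — the filtration is bounded below (zero internal edges) and exhaustive, and on each arity $n$ and each fixed number of white vertices it is finite, and the product over $n$ in the definition of $\deform$ is handled degreewise — shows $H(\deform(\LieOp_d \ra \oLieC{d})) \cong H(\fgc_d)$. Finally, to conclude that $F$ itself (not merely an abstract isomorphism) is a quasi-isomorphism, I would note that $F$ is by construction the edge map of this spectral sequence, i.e. it is the composite of the projection to $E_0^{0,*}$ with the identification, hence induces the isomorphism on $E_1$ and therefore on cohomology.

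The main obstacle I expect is the local cohomology computation on a single irreducible component, i.e. showing that each i.c.c.-local piece of $\delta_0$ is acyclic away from the single-vertex corolla. Signs are genuinely delicate here: one must track the implicit ordering of internal vertices (for $d$ odd) or internal edges (for $d$ even), reconcile them with the sign conventions in $\delta$ of $\hoLie_d$ and in the $\fgc_d$-differential, and make sure the identification of the local complex with a known acyclic complex (the cobar construction of the Koszul dual cooperad of $\LieOp_d$, suitably truncated, or the "attach a leaf" complex) is sign-correct. A secondary subtlety is convergence of the spectral sequence given the direct-product structure in the definition of the deformation complex; this is standard but must be stated carefully since the filtration is by a quantity (number of internal edges) that is unbounded above on the whole complex, so one should argue arity-by-arity where everything is finite-dimensional.
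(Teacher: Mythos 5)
Your overall strategy --- contract the internal irreducible components first by a spectral sequence and land on $\fgc_d$ on the first page --- does not work as described, and the gap is structural rather than technical. The component of the differential that grows an i.c.c.\ by one black vertex at a leaf attached to a white vertex $v$ simultaneously splits $v$ into two white vertices and redistributes \emph{all} the other edges incident to $v$, including edges belonging to other i.c.c.'s, over the two new vertices. Consequently: (i) there is no ``fixed combinatorial skeleton'' subcomplex, and the edge-creating part of the differential does not decompose as a tensor product over the i.c.c.'s, since it couples every i.c.c.\ sharing a white vertex with the one being grown; and (ii) a graph all of whose i.c.c.'s are single black vertices is in general \emph{not} a cocycle for the edge-creating part: grafting the binary corolla at the root of an i.c.c.\ produces terms whose new leaf sits on a fresh univalent white vertex, while grafting at a leaf produces terms whose two new deep leaves sit on the two halves of a split white vertex, and these are distinct graphs that do not cancel (the cancellation $\delta(i(\mu))=0$ used for the single generator does not propagate, because of the redistribution of the other i.c.c.'s edges). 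Since your $E_0$-differential is homogeneous in the number of internal edges, a class on the next page must be a cocycle in each internal-edge degree separately, so the no-internal-edge graphs simply fail to survive; the identification of the first page with $\fgc_d$ is unjustified and, as stated, false. The one-dimensionality of $H(\deform(\LieOp_d\to\LieOp_d))$, which is what your ``local acyclicity'' really is, does not apply to the white-vertex-decorated local complex. (Smaller issues: filtering by the number of internal edges with the standard convention makes the $E_0$-differential the edge-\emph{preserving}, i.e.\ $\fgc_d$-like, part, so you would need to filter by a complementary quantity such as the number of i.c.c.'s to get the leading term you want; you also conflate $E_1$ with $E_2$ and never rule out higher differentials.)

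The paper's proof is organized in the opposite order precisely to avoid this entanglement. It first splits off an acyclic summand of graphs with all white vertices univalent (essentially $\deform(\LieOp_d\to\LieOp_d)$ plus one extra generator), then filters the rest by $\#\{\text{white vertices of valency}\ge 3\}+3\,\#\{\text{internal edges}\}$, so that the leading differential only creates \emph{bivalent white} vertices and never changes the internal structure of the i.c.c.'s. The associated graded splits by the number of star vertices (i.c.c.'s containing an internal edge); the star-free summand gives $\gc_d^2\oplus\gc_d$ by Willwacher's theorem, and each summand with at least one star vertex is shown acyclic by the explicit ``antenna'' argument on the chain of bivalent white vertices issuing from a distinguished star edge. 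To salvage your route you would need a new computation of the cohomology of the edge-creating differential with the white-vertex redistribution built in; nothing you cite provides that.
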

\begin{proof}

Recall that the natural projection $\fgc_d \ra \gc_d^{\geq 2}=\gc_d^2 \oplus \gc_d$ is a quasi-isomorphism and thus it is enough to show that the map $C:=\deform(\LieOp_d \ra \oLieC{d})  \substack{f}{\ra} \gc_d^{\geq 2}$ is a quasi-isomorphism. 


We notice that the generators of $\oLieC{d}$ can be suitably represented as connected graphs having two types of vertices, the white vertices corresponding to the input vertices and the star vertices corresponding to the irreducible connected components. This graph is obtained by first erasing the output vertex and all attached edges. Then we contract the internal vertices of each irreducible component to a single vertex, denoted $\stars$, which is decorated by the i.c.c. it originates. Last, the star vertices corresponding to $\begin{tikzpicture}[baseline={([yshift=-0.5ex]current bounding box.center)},scale=0.35,transform shape]
\node (v0) [circle,draw] at (0,0.85) {};
\node (v1) [circle,draw,fill] at (0,0) {};
\node (v2) [circle,draw] at (-0.75,-0.75) {};
\node (v3) [circle,draw] at (0.75,-0.75) {};
\draw (v0) edge (v1);
\draw (v1) edge (v2);
\draw (v1) edge (v3);
\end{tikzpicture}$ is replaced by an edge between the associated white vertices.

\begin{ex}
\item
\[
\begin{tikzpicture}[baseline={([yshift=-0.5ex]current bounding box.center)},scale=0.75,transform shape]
\node [circle,draw] (v2) at (0,1.25) {};
\node [circle,draw,fill] (v1) at (-0.5,0) {};
\node [circle,draw,fill] (v3) at (0.5,0) {};
\node [circle,draw] (v5) at (0,-1.5) {\Large $2$};
\node [circle,draw] (v6) at (-1.5,-1.5) {\Large $1$};
\node [circle,draw] (v4) at (1,-1.5) {\Large $3$};
\draw  (v1) edge (v2);
\draw  (v2) edge (v3);
\draw  (v3) edge (v4);
\draw  (v5) edge (v3);
\node [circle,draw,fill] (v7) at (-1.5,-0.5) {};
\draw [bend right] (v7) edge (v6);
\draw [bend left] (v7) edge (v6);
\draw  (v7) edge (v1);
\draw  (v1) edge (v5);
\end{tikzpicture}
\lra
\begin{tikzpicture}[baseline={([yshift=-0.5ex]current bounding box.center)},scale=0.75,transform shape]
\node [star,star points=10,draw] (v2) at (0,0) {};
\node [circle,draw] (v1) at (-1,-1) {\Large $1$};
\node [circle,draw] (v3) at (1,-1) {\Large $2$};
\node [circle,draw] (v4) at (2,-1) {\Large $3$};
\draw  (v1) edge [bend left] (v2);
\draw  (v1) edge [bend right] (v2);
\draw  (v2) edge (v3);
\draw  (v3) edge (v4);
\node at (0.85,0.2) {\icgDec{decorationHypGraph1}};
\end{tikzpicture}
\]

\item

\[
\begin{tikzpicture}[baseline={([yshift=-0.5ex]current bounding box.center)},scale=0.75,transform shape]
\node [circle,draw] (v2) at (0,1.25) {};
\node [circle,draw,fill] (v1) at (-0.5,0) {};
\node [circle,draw,fill] (v3) at (0.5,0) {};
\node [circle,draw] (v5) at (0,-1.5) {\Large $2$};
\node [circle,draw] (v6) at (-1.5,-1.5) {\Large $1$};
\node [circle,draw] (v4) at (1,-1.5) {\Large $3$};
\draw  (v1) edge (v2);
\draw  (v2) edge (v3);
\draw  (v3) edge (v4);
\draw  (v5) edge (v3);
\node [circle,draw,fill] (v7) at (-1,-0.5) {};
\draw  (v7) edge (v6);
\draw  (v7) edge (v1);
\draw  (v1) edge (v5);
\draw  (v7) edge (v5);
\end{tikzpicture}
\lra
\begin{tikzpicture}[baseline={([yshift=-0.5ex]current bounding box.center)},scale=0.75,transform shape]
\node [star,star points=10,draw] (v2) at (0,0) {};
\node [circle,draw] (v1) at (-1,-1) {\Large $1$};
\node [circle,draw] (v3) at (1,-1) {\Large $2$};
\node [circle,draw] (v4) at (2,-1) {\Large $3$};
\draw  (v1) edge (v2);
\draw  (v2) edge [bend left] (v3);
\draw  (v2) edge [bend right] (v3);
\draw  (v3) edge (v4);
\node at (1,0.4) {\icgDec{decorationHypGraph2}};
\end{tikzpicture}
\]

\item

\end{ex}

An edge between a white vertex and a star vertex is called \textit{star edge}. Note that by construction, there are no edges between star vertices.


The complex $C$ splits as a direct sum $C=(C^{\leq 1},\delta) \oplus (C^{\geq 2},\delta)$ where $C^{\leq 1}$ is generated by graphs with all white vertices having valency less or equal to $1$ and $C^{\geq 2}$ is generated by graphs with at least one white vertex of valency at least $2$.

The complex $C^{\leq 1}$ is acyclic. Indeed this complex is equal to

\[
(\text{span} \langle \begin{tikzpicture}[baseline={([yshift=-0.5ex]current bounding box.center)},scale=0.65,transform shape]
\node [circle,draw] at (0,0.85) {};
\node [circle,draw] at (0,-0.75) {};
\end{tikzpicture} ,\begin{tikzpicture}[baseline={([yshift=-0.5ex]current bounding box.center)},scale=0.65,transform shape]
\node (v0) [circle,draw] at (0,0.85) {};
\node (v1) [circle,draw,fill] at (0,0) {};
\node (v2) [circle,draw] at (-0.75,-0.75) {};
\node (v3) [circle,draw] at (0.75,-0.75) {};
\draw (v0) edge (v1);
\draw (v1) edge (v2);
\draw (v1) edge (v3);
\end{tikzpicture} , 
\begin{tikzpicture}[baseline={([yshift=-0.5ex]current bounding box.center)},scale=0.75,transform shape]
\node [star,star points=10,draw] (v1) at (0,0) {};
\node [circle,draw] (v2) at (0,1) {};
\node [circle,draw] (v3) at (1,0.5) {};
\node [circle,draw] (v4) at (1,-0.5) {};
\node [circle,draw] (v5) at (0,-1) {};
\node [circle,draw] (v6) at (-1,-0.5) {};
\node [circle,draw] (v7) at (-1,0.5) {};
\node at (-1,0.13)  {\huge \vdots};
\draw  (v1) edge (v2);
\draw  (v1) edge (v3);
\draw  (v1) edge (v4);
\draw  (v1) edge (v5);
\draw  (v6) edge (v1);
\draw  (v1) edge (v7);
\end{tikzpicture} \rangle , \delta_0).
\]

As the differential $\delta$ cannot create univalent white vertices we see that the complex where we omit the first graph is isomorphic to $\deform(\LieOp_d \ra \LieOp_d)$. As this complex is one dimensional with unique cohomology class given by $\begin{tikzpicture}[baseline={([yshift=-0.5ex]current bounding box.center)},scale=0.65,transform shape]
\node (v0) [circle,draw] at (0,0.85) {};
\node (v1) [circle,draw,fill] at (0,0) {};
\node (v2) [circle,draw] at (-0.75,-0.75) {};
\node (v3) [circle,draw] at (0.75,-0.75) {};
\draw (v0) edge (v1);
\draw (v1) edge (v2);
\draw (v1) edge (v3);
\end{tikzpicture}$, the equality $\delta \left(\begin{tikzpicture}[baseline={([yshift=-0.5ex]current bounding box.center)},scale=0.65,transform shape]
\node [circle,draw] at (0,0.5) {};
\node [circle,draw] at (0,-0.5) {};
\end{tikzpicture} \right) =\begin{tikzpicture}[baseline={([yshift=-0.5ex]current bounding box.center)},scale=0.65,transform shape]
\node (v0) [circle,draw] at (0,0.85) {};
\node (v1) [circle,draw,fill] at (0,0) {};
\node (v2) [circle,draw] at (-0.75,-0.75) {};
\node (v3) [circle,draw] at (0.75,-0.75) {};
\draw (v0) edge (v1);
\draw (v1) edge (v2);
\draw (v1) edge (v3);
\end{tikzpicture}$ and the above isomorphism imply that $C^{\leq 1}$ is acyclic. \\

Hence it is enough to prove that the restriction $f:(C^{\geq 2},\delta) \ra (\gc_d^2,0) \oplus (\gc_d,\delta)$ is a quasi-isomorphism. On the r.h.s. we consider a filtration on the number of vertices. Then on the first page the induced differential vanishes and the second page is equal to $(\gc_d^2,0) \oplus (\gc_d,\delta)$.

On the l.h.s. we consider a filtration defined by

\[
F_{-p}= \text{span of graphs with }\#\{\text{white vertices of valency at least $3$}\}+ 3\#\{\text{internal edges}\} \geq p.
\]

There is exactly one situation where the number of vertices of valency at least $3$ is decreasing. Let $v$ be a vertex of valency exactly $3$, e.g. consider

\[
\begin{tikzpicture}[baseline={([yshift=-0.5ex]current bounding box.center)},scale=0.65,transform shape]
\node (v0) [circle,draw] at (-1,-1) {};
\node (v1) [circle,draw,fill] at (0,0) {};
\draw (v0) edge (v1);
\node (v3) [circle,draw] at (1,-1) {\huge $v$};
\node (v4) [circle,draw,fill] at (2,0) {};
\node (v5) [circle,draw] at (3,-1) {};
\draw (v4) edge (v5);
\node (v6) [circle,draw,fill] at (2,1) {};
\node (v7) [circle,draw] at (1,2) {};
\draw (v4) edge (v6);
\draw (v6) edge (v7);
\draw (v1) edge (v7);
\draw (v3) edge (v1);
\draw (v6) [bend right] edge (v3);
\draw (v3) edge (v4);
\end{tikzpicture}.
\]

Then the differential can split $v$ in to two bivalent vertices $u$ and $w$. In the above example the differential creates (up to changing $u$ and $w$) three such terms

\[
\begin{tikzpicture}[baseline={([yshift=-0.5ex]current bounding box.center)},scale=0.65,transform shape]
\node (v0) [circle,draw] at (-1,-1) {};
\node (v1) [circle,draw,fill] at (0,0) {};
\draw (v0) edge (v1);
\node (v4) [circle,draw,fill] at (2,0) {};
\node (v5) [circle,draw] at (3,-1) {};
\draw (v4) edge (v5);
\node (v6) [circle,draw,fill] at (2,1) {};
\node (v7) [circle,draw] at (1,2) {};
\draw (v4) edge (v6);
\draw (v6) edge (v7);
\draw (v1) edge (v7);
\node (v8) [circle,draw,fill] at (1,-0.5) {};
\node (v9) [circle,draw] at (0.5,-1.5) {\huge $u$};
\node (v10) [circle,draw] at (1.5,-1.5) {\huge $w$};
\draw (v8) edge (v9);
\draw (v8) edge (v10);
\draw (v1) edge (v8);
\draw (v6) [bend right] edge (v9);
\draw (v4) edge (v10);
\end{tikzpicture}, \, \,
\begin{tikzpicture}[baseline={([yshift=-0.5ex]current bounding box.center)},scale=0.65,transform shape]
\node (v0) [circle,draw] at (-1,-1) {};
\node (v1) [circle,draw,fill] at (0,0) {};
\draw (v0) edge (v1);
\node (v4) [circle,draw,fill] at (2,0) {};
\node (v5) [circle,draw] at (3,-1) {};
\draw (v4) edge (v5);
\node (v6) [circle,draw,fill] at (2,1) {};
\node (v7) [circle,draw] at (1,2) {};
\draw (v4) edge (v6);
\draw (v6) edge (v7);
\draw (v1) edge (v7);
\node (v8) [circle,draw,fill] at (1,-0.5) {};
\node (v9) [circle,draw] at (0.5,-1.5) {\huge $u$};
\node (v10) [circle,draw] at (1.5,-1.5) {\huge $w$};
\draw (v8) edge (v9);
\draw (v8) edge (v10);
\draw (v1) edge (v9);
\draw (v6) [bend right] edge (v10);
\draw (v4) edge (v8);
\end{tikzpicture}, \, \,\,
\begin{tikzpicture}[baseline={([yshift=-0.5ex]current bounding box.center)},scale=0.65,transform shape]
\node (v0) [circle,draw] at (-1,-1) {};
\node (v1) [circle,draw,fill] at (0,0) {};
\draw (v0) edge (v1);
\node (v4) [circle,draw,fill] at (2,0) {};
\node (v5) [circle,draw] at (3,-1) {};
\draw (v4) edge (v5);
\node (v6) [circle,draw,fill] at (2,1) {};
\node (v7) [circle,draw] at (1,2) {};
\draw (v4) edge (v6);
\draw (v6) edge (v7);
\draw (v1) edge (v7);
\node (v8) [circle,draw,fill] at (1,-0.5) {};
\node (v9) [circle,draw] at (0.5,-1.5) {\huge $u$};
\node (v10) [circle,draw] at (1.5,-1.5) {\huge $w$};
\draw (v8) edge (v9);
\draw (v8) edge (v10);
\draw (v1) edge (v9);
\draw (v6) [bend right] edge (v8);
\draw (v4) edge (v10);
\end{tikzpicture}.
\]

Hence the number of white vertices of valency at least $3$ can decrease by $1$ but note that an internal edge needs to be created. In this case the number $F_{-p}$ increases by $1$ and hence is respected by the differential.

As the differential does not create new univalent vertices we see that the differential $\delta_0$ on the first page can only create new white bivalent vertices. As the number of star vertices is preserved, we have a direct sum decomposition $\text{gr}C^{\geq 2}=\bigoplus_{N \geq 0}{C_N}$ where $C_N$ is the subcomplex spanned by graphs with $N$ star vertices. It has been proven \cite{MR3348138} $H(C_0,\delta_0)=\gc_d^2 \oplus \gc_d$. It remains to show that $(\bigoplus_{N \geq 1}{C_N},\delta_0)$ is acyclic.


The differential $\delta_0$ can be represented by

\[
\delta_0(\Gamma) = -2\sum_{v \in V_o(\Gamma)}{\begin{tikzpicture}[baseline={([yshift=-0.5ex]current bounding box.center)},scale=\scaleValue,transform shape]
\node (v1) [circle,draw] at (0,0.5) {};
\node (v2) [circle,draw] at (0,-1) {\huge $\Gamma$};
\draw (v1) edge (v2);
\end{tikzpicture}}+\sum_{v \in V_o(\Gamma)}{\Gamma \, \text{o}_v \, \begin{tikzpicture}[baseline={([yshift=-0.5ex]current bounding box.center)},scale=\scaleValue,transform shape]
\node (v1) [circle,draw] at (0,0) {};
\node (v2) [circle,draw] at (1,0) {};
\draw (v1) edge (v2);
\end{tikzpicture}}\]

where $V_o(\Gamma)$ denotes the set of white vertices of $\Gamma$. The first part is given by summing over all attachments of a new univalent white vertex to a white vertex of $\Gamma$. The term $\Gamma \, o_v' \, \begin{tikzpicture}[baseline={([yshift=-0.5ex]current bounding box.center)},scale=\scaleValue,transform shape]
\node (v1) [circle,draw] at (0,0) {};
\node (v2) [circle,draw] at (1,0) {};
\draw (v1) edge (v2);
\end{tikzpicture}$ is given by splitting the white vertex $v$ and summing over all attachments of the halfedges pf $v$ to the two newly created vertices. As the graphs $\Gamma$ are connected this differential is equal to

\[
\delta_0(\Gamma)=\sum_{\stackrel{v \in V_o(\Gamma)}{\text{valency of $v \geq 2$}}}{\Gamma \, \text{o}_v \, \begin{tikzpicture}[baseline={([yshift=-0.5ex]current bounding box.center)},scale=\scaleValue,transform shape]
\node (v1) [circle,draw] at (0,0) {};
\node (v2) [circle,draw] at (1,0) {};
\draw (v1) edge (v2);
\end{tikzpicture}},
\]

where we omit the summands creating a univalent vertex.

We show that the complex spanned by graphs having at least one star vertex and at least one white vertex with valency at least $2$ is acyclic. It is enough to show that the complex spanned by such graphs with a labelling of star edges with integers is acyclic. In particular the set of star edges is totally ordered. Consider the star edge with minimum label which is attached to a white vertex of valency at least $2$. We call \textit{antenna} the sequence of white vertices starting from $\begin{tikzpicture}[baseline={([yshift=-0.5ex]current bounding box.center)},scale=0.75,transform shape]
\node [star,star points = 10,draw] (v1) at (-1,0) {};
\node [circle,draw] (v2) at (0.5,0) {};
\draw (v1) edge node [midway,above] {$i_{\min}$} (v2);
\end{tikzpicture}$ and ending with a white vertex of valency different from $2$ or a star vertex. The two-valent white vertices of this sequence are called \textit{antenna vertices}. Consider a filtration on the number of non-antenna vertices. To finish the argument it is sufficient to observe that the graphs with no antenna vertices and graphs with an antenna given by

\[
\begin{tikzpicture}[baseline={([yshift=-0.5ex]current bounding box.center)},scale=0.75,transform shape]
\node (v1) [star,star points=10,draw] at (0,0) {};
\node (v2) [circle,draw] at (1.5,0) {};
\node (v3) [star,star points=10,draw] at (3,0) {};
\draw (v1) edge node [midway,above] {$i_{\min}$} (v2);
\draw (v2) edge (v3);
\end{tikzpicture}
\]

are not cocycles and we conclude that $(\bigoplus_{N \geq 1}{C_N},\delta_0)$ is acyclic.


On the next page of the spectral sequence, there is a map

\[
(H^\ast(C_0,\delta_0),\delta_1) \lra (\gc_d^2,0) \oplus (\gc_d,\delta),
\]

where $H^\ast(C_0\delta_0) \cong \gc_d^2 \oplus \gc_d$ as vector spaces and $\delta_1$ is the differential which increases the value of the parameter of the filtration by $1$. Hence $\delta_1$ cannot create new star vertices (as this gives an increase by at least $2$) and thus $\delta_1$ can only split white vertices as the usual differential in $\gc_d$. Finally, this shows that we have an isomorphism of complexes on the second page and the result follows.

\end{proof}

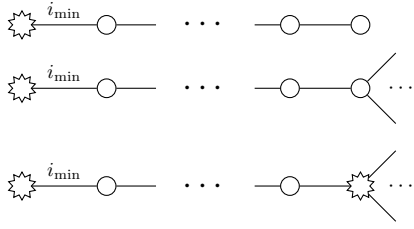
\begin{figure}

\begin{equation*}
\begin{array}{l}
\begin{tikzpicture}[baseline={([yshift=-0.5ex]current bounding box.center)},scale=0.75,transform shape]
\node [star,star points = 10,draw] (v1) at (-1,0) {};
\node [circle,draw] (v2) at (0.5,0) {};
\node [circle,draw] (v3) at (3.75,0) {};
\node [circle,draw] (v4) at (5,0) {};
\draw (v1) edge node [midway,above] {$i_{\min}$} (v2);
\draw  (v3) edge (v4);
\node (v5) at (1.5,0) {};
\node (v6) at (3,0) {};
\draw  (v2) edge (v5);
\draw  (v6) edge (v3);
\node at (2.25,0) {\huge $\cdots$};
\end{tikzpicture} \\
\begin{tikzpicture}[baseline={([yshift=-0.5ex]current bounding box.center)},scale=0.75,transform shape]
\node [star,star points = 10,draw] (v1) at (-1,0) {};
\node [circle,draw] (v2) at (0.5,0) {};
\node [circle,draw] (v3) at (3.75,0) {};
\node [circle,draw] (v4) at (5,0) {};
\draw (v1) edge node [midway,above] {$i_{\min}$} (v2);
\draw  (v3) edge (v4);
\node (v5) at (1.5,0) {};
\node (v6) at (3,0) {};
\draw  (v2) edge (v5);
\draw  (v6) edge (v3);
\node at (2.25,0) {\huge $\cdots$};
\node (v7) at (5.75,0.75) {};
\node (v8) at (5.75,-0.75) {};
\draw  (v4) edge (v7);
\draw  (v4) edge (v8);
\node at (5.75,0) {\large  $\cdots$};
\end{tikzpicture} \\
\begin{tikzpicture}[baseline={([yshift=-0.5ex]current bounding box.center)},scale=0.75,transform shape]
\node [star,star points = 10,draw] (v1) at (-1,0) {};
\node [circle,draw] (v2) at (0.5,0) {};
\node [circle,draw] (v3) at (3.75,0) {};
\node [star,star points=10,draw] (v4) at (5,0) {};
\draw (v1) edge node [midway,above] {$i_{\min}$} (v2);
\draw  (v3) edge (v4);
\node (v5) at (1.5,0) {};
\node (v6) at (3,0) {};
\draw  (v2) edge (v5);
\draw  (v6) edge (v3);
\node at (2.25,0) {\huge $\cdots$};
\node (v7) at (5.75,0.75) {};
\node (v8) at (5.75,-0.75) {};
\draw  (v4) edge (v7);
\draw  (v4) edge (v8);
\node at (5.75,0) {\large  $\cdots$};
\end{tikzpicture}
\end{array}
\end{equation*}

\caption{The different types of antennas.}

\end{figure}

As a corollary we obtain

\begin{cor}

\[
H^\bullet(\deform(\LieOp_d \stackrel{i}{\ra} \oLieC{d}))= \gc_d^2 \oplus H^\bullet(\gc_d)
\]


In particular


\[
H^0(\defOLie{2})=\grt_1.
\]

\end{cor}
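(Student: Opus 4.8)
The plan is to read the Corollary off the Theorem, using only the structure of $\fgc_d$ recalled above, and then to obtain the $d=2$ statement by a short degree count. First I would invoke the Theorem: the map $F:\deform(\LieOp_d\stackrel{i}{\ra}\oLieC{d})\ra\fgc_d$ is a quasi-isomorphism, hence $H^\bullet\big(\deform(\LieOp_d\stackrel{i}{\ra}\oLieC{d})\big)\cong H^\bullet(\fgc_d)$. Next I would feed this into the recalled theorem of Willwacher that the natural projection $\fgc_d\ra\gc_d^{\geq 2}=\gc_d\oplus\gc_d^2$ is a quasi-isomorphism, where the summand $(\gc_d^2,0)$ carries the zero differential. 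Consequently $H^\bullet(\fgc_d)\cong H^\bullet(\gc_d^{\geq2})=H^\bullet(\gc_d)\oplus\gc_d^2$, and the first displayed identity of the Corollary follows.

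For the ``in particular'' I would pass to cohomological degree $0$. On the one hand Willwacher's theorem gives $H^0(\gc_2)=\grt_1$. On the other hand, for $d=2$ we have $2d+1=5\equiv 1\bmod 4$, so $\gc_2^2=\bigoplus_{p\geq1,\,p\equiv1\bmod 4}\mbb{K}[d-p]$; with the shift conventions fixed above the summand $\mbb{K}[2-p]$ is concentrated in cohomological degree $p-2$, and as $p$ runs through $\{1,5,9,\dots\}$ the degree $p-2$ runs through $\{-1,3,7,\dots\}$, which never contains $0$. Thus $(\gc_2^2)^0=0$ and $H^0\big(\deform(\LieOp_2\stackrel{i}{\ra}\oLieC{2})\big)=\grt_1$. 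To write this as $\defOLie{2}$ rather than $\defOCLie{2}$ one would also note that the quotient of $\deform(\LieOp_2\ra\oFun(\LieOp_2))$ by the connected subcomplex $\deform(\LieOp_2\ra\oLieC{2})$ is spanned by graphs that disconnect upon erasing the output vertex and does not contribute in degree $0$; alternatively one simply reads $\defOLie{2}$ here as its connected version.

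I do not expect a real obstacle: the Corollary is a formal consequence of the Theorem. The only places asking for care are the shift and sign conventions entering the degree bookkeeping for $\gc_2^2$ --- one must make sure that none of the extra one-dimensional classes $\mbb{K}[d-p]$ lands in degree $0$ when $d=2$, which is precisely the computation above --- and, if one insists on the non-connected target $\oFun(\LieOp_2)$, the low-degree comparison between the connected and the full deformation complexes.
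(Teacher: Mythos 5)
Your proposal is correct and follows exactly the route the paper intends: the Corollary is read off directly from the Theorem ($F$ a quasi-isomorphism onto $\fgc_d$) combined with the quasi-isomorphism $\fgc_d\ra\gc_d^{\geq 2}=\gc_d\oplus\gc_d^2$ and Willwacher's computation $H^0(\gc_2)=\grt_1$. Your explicit check that $\gc_2^2$ contributes nothing in degree $0$ (the shifts land in degrees $-1,3,7,\dots$) and your remark on the connected-versus-full deformation complex are details the paper leaves implicit, and they are both handled correctly.
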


Another application is the following: we know that $\deform(\LieOp_d \stackrel{i}{\ra} \oLieC{d})$ controls the infinitesimal homotopy non-trivial deformations of $i$, which are given by $H^1(\deform(\LieOp_d \stackrel{i}{\ra} \oLieC{d})) \cong H^1(\gc_d^2) \oplus H^1(\gc_d)$, while the obstructions to extending an infinitesimal deformation $\Delta$ of $i$ to a genuine morphism of (completed) operads

\[
i^\Delta:\hoLie_d \ra \oLieC{d}
\]

lie in $H^2(\gc_d^2) \oplus H^2(\gc_d)$. \\

We now consider the case $d=1$ corresponding to the operad of usual Lie algebras. It is noticed in \cite{MR3312446} that $H^1(\gc_1^2)=0$ while $H^1(\gc_1)$ is one-dimensional and is generated by the theta graph

\[
\Delta:=\begin{tikzpicture}
[baseline={([yshift=-0.5ex]current bounding box.center)},scale=0.5,transform shape]
\node (v0) [circle,draw,fill,label=below: \huge $1$] at (0,0) {};
\node (v1) [circle,draw,fill,label=below: \huge $2$] at (2,0) {};
\draw [bend left,-latex] (v0) edge (v1);
\draw [bend right,-latex] (v0) edge (v1);
\draw [-latex] (v0) edge (v1);
\end{tikzpicture}.
\]

Therefore the standard morphism $i:\LieOp \ra \oLieC{}$ admits precisely one homotopy non-trivial infinitesimal deformation corresponding to the above mentioned theta graph, which in our approach is incarnated as the following element in $\oLieC{}$:

\[
\begin{tikzpicture}
[baseline={([yshift=-0.5ex]current bounding box.center)},scale=0.5,transform shape]
\node (v0) [circle,draw,fill,label=below: \huge $1$] at (0,0) {};
\node (v1) [circle,draw,fill,label=below: \huge $2$] at (2,0) {};
\draw [bend left,-latex] (v0) edge (v1);
\draw [bend right,-latex] (v0) edge (v1);
\draw [-latex] (v0) edge (v1);
\end{tikzpicture} \cong \begin{tikzpicture}[baseline={([yshift=-0.5ex]current bounding box.center)},scale=\scaleValue,transform shape]
\node (v0) [circle,draw,minimum size = \minRadius] at (0,1.25) {};
\node (v1) [circle,draw,fill] at (0,0) {};
\node (v2) [circle,draw,fill] at (-1,0) {};
\node (v3) [circle,draw,fill] at (1,0) {};
\node (v4) [circle,draw] at (-1.5,-1) {\huge $1$};
\node (v5) [circle,draw] at (1.5,-1) {\huge $2$};
\draw (v0) edge (v1);
\draw (v0) edge (v2);
\draw (v0) edge (v3);
\draw (v1) edge (v4);
\draw (v1) edge (v5);
\draw (v2) edge (v4);
\draw (v2) edge (v5);
\draw (v3) edge (v4);
\draw (v3) edge (v5);
\end{tikzpicture}.
\]

Moreover the second cohomology group is generated by 

\[
\begin{tikzpicture}[baseline={([yshift=-0.5ex]current bounding box.center)},scale=\scaleValue,transform shape]
\node (v0) [circle,draw,fill,label=below: \huge $1$] at (0,0) {};
\node (v1) [circle,draw,fill,label=below: \huge $2$] at (2,0) {};
\node (v2) [circle,draw,fill,label=above: \huge $3$] at (1,1.7) {};
\draw [-latex] (v0) edge (v1);
\draw [-latex] (v0) edge (v2);
\draw [-latex] (v1) edge (v2); 
\end{tikzpicture}.
\]

This obstruction cohomology class cannot be hit when we try to extend the infinitesimal deformation $\Delta$ to a genuine deformation (cf. \cite{MR3312446} Section $5$). Hence we conclude that the standard morphism $i:\LieOp \ra \oLieC{}$ admits precisely one (up to homotopy equivalence) non-trivial deformation. Remarkably, this unique non-trivial deformation is directly related to the universal enveloping construction and the PBW theorem as we show in Section \ref{uniqueDeformation}.

\end{section}

\begin{section}{Gutt quantizations} \label{guttQuant}

Let $V$ be a vector space equipped with a Lie bracket $[,]$. This is equivalent to a morphism of operads


\[
\rho:\LieOp \lra \edom_V.
\] 


Using the functor $\oFun$, we see that there is an associated morphism of operads


\[
\hat{\rho}: \oLie{} \lra \edom_{\odot V}
\]


\noi given by poly-differential operators.


The space $\odot V$ can be canonically given a structure of an associative algebra, with product denoted by $\ast$, as follows: denote by $\mcl{U}V$ the universal enveloping algebra of $V$, i.e.


\[
\mcl{U}V:=\mcl{T}V/\langle v_1 \otimes v_2 - v_2 \otimes v_1 - [v_1,v_2] | v_1,v_2 \in V \rangle,
\]


\noi where $\mcl{T}V:=\bigoplus_{n \geq 0}{\bigotimes^nV}$ is the tensor algebra. Then $\mcl{U}V$ is an associative algebra with product $\odot$ given by $\otimes$ mod $I$.


The spaces $\odot V$ and $\mcl{U}V$ are related as vector spaces by the Poincaré-Birkhoff-Witt (PBW) theorem:

\begin{thm}

As a vector space $\odot V \cong \mcl{U}V$.

\end{thm}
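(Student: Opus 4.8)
The plan is to establish the isomorphism by the classical route: the symmetrization map together with a filtration/associated-graded argument, the only delicate point being injectivity.

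First I would introduce the \emph{symmetrization map}
\[
\omega:\odot V \lra \mcl{U}V,\qquad v_1\odot\cdots\odot v_n\lma \frac{1}{n!}\sum_{\sigma\in\mbb{S}_n} v_{\sigma(1)}\cdots v_{\sigma(n)},
\]
where the products on the right are taken in $\mcl{U}V$ and $\mbb{K}$ is assumed of characteristic zero, as elsewhere in the paper, so that $\tfrac1{n!}$ is available. Since the right-hand side is visibly invariant under permuting the $v_i$, it factors through $\odot^n V$, so $\omega$ is well defined; extending linearly over $n$ gives a $\mbb{K}$-linear map. It is standard (and easy to check) that $\omega$ is in fact a morphism of cocommutative coalgebras, but only its linearity is needed here.

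Next I would put on $\mcl{U}V$ the standard increasing filtration $F_0\subseteq F_1\subseteq\cdots$, where $F_p$ is the image in $\mcl{U}V$ of $\bigoplus_{k\leq p}\bigotimes^k V\subseteq \mcl{T}V$. Because $v w - w v=[v,w]$ in $\mcl{U}V$, any commutator lowers the filtration degree, so the associated graded algebra $\mathrm{gr}\,\mcl{U}V$ is commutative; hence the canonical surjection of algebras $\pi:\odot V\twoheadrightarrow\mathrm{gr}\,\mcl{U}V$ out of the free graded-commutative algebra on $V$ is well defined. One then checks directly that $\omega$ is filtered (it sends $\odot^{\leq n}V$ into $F_n$) and that the induced map on associated graded objects is precisely $\pi$. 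Since the filtration is exhaustive and bounded below, $\mathrm{gr}\,\omega=\pi$ being surjective forces $\omega$ itself to be surjective.

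The hard part will be injectivity of $\omega$, equivalently injectivity of $\pi$, i.e. the statement that $\dim F_n/F_{n-1}=\dim\odot^n V$ for all $n$; this is exactly where the Jacobi identity genuinely enters, since without it $\mathrm{gr}\,\mcl{U}V$ could a priori be a proper quotient of $\odot V$. To prove it I would construct a left inverse of $\omega$ by exhibiting an honest representation of $\mcl{U}V$ on the vector space $\odot V$: one builds operators $\ell_v\in\mathrm{End}(\odot V)$, $v\in V$, of the form ``$\ell_v(x)=v\odot x + (\text{lower-degree correction built from }[\,,\,])$'', defined inductively on polynomial degree so that $[\ell_v,\ell_w]=\ell_{[v,w]}$ --- closure of these brackets is precisely what uses Jacobi --- and then invokes the universal property of $\mcl{U}V$ to extend $v\mapsto\ell_v$ to an algebra map $\mcl{U}V\to\mathrm{End}(\odot V)$. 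Evaluating this action at $1\in\odot^0V$ yields a linear map $\mcl{U}V\to\odot V$ which is readily seen to be inverse to $\omega$ on associated graded, hence a genuine two-sided inverse. Equivalently, one can fix an ordered basis $\{e_i\}$ of $V$ and apply Bergman's Diamond Lemma to the presentation $\mcl{U}V=\mcl{T}V/(e_ie_j-e_je_i-[e_i,e_j])$: the only overlap ambiguity is the triple $e_ie_je_k$, its confluence is exactly the Jacobi identity, and confluence yields that the ordered monomials form a basis, matching the monomial basis of $\odot V$ under $\omega$. Either way, resolving this cubic ambiguity is the crux; everything else is bookkeeping.
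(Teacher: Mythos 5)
The paper does not actually prove this statement: it is the classical Poincar\'e--Birkhoff--Witt theorem, recalled as a known fact, and the only thing the paper adds afterwards is the explicit symmetrization map $\sigma:\odot V \ra \mcl{U}V$ that realizes the isomorphism. Your proposal is a correct and standard proof of exactly that statement, and your map $\omega$ coincides with the paper's $\sigma$, so the two are fully compatible. Your outline gets the logical structure right: well-definedness and surjectivity of the symmetrization map are soft (filtration plus commutativity of the associated graded), and the entire content sits in injectivity, which you correctly reduce either to constructing a representation of $\mcl{U}V$ on $\odot V$ by deformed left multiplications (where closure of $[\ell_v,\ell_w]=\ell_{[v,w]}$ is where the Jacobi identity is used) or to resolving the single overlap ambiguity $e_ie_je_k$ in the Diamond Lemma. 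Two minor remarks: the $1/n!$ normalization requires characteristic zero, which is the paper's standing assumption, and the Diamond Lemma route proves the stronger ordered-monomial basis statement over any field, so it is the more general of your two options. Nothing is missing.
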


More precisely consider the linear map \cite{vinayKontsevich}


\ba{rccc}
\sigma: & \odot V & \lra & \mcl{U}V \\
& v_1 \odot \cdots \odot v_n & \lma & \sum_{\tau \in \mbb{S}_n}{\frac{1}{n!}v_{\tau(1)}\otimes \cdots \otimes v_{\tau(n)}}.
\ea


By PBW this is an isomorphism of vector spaces and this allows us to define a product of $p,q \in \odot V$ by


\[
p \ast q := \sigma^{-1}(\sigma(p) \circ \sigma(q)).
\]


It has been shown \cite{vinayKontsevich,MR2795152} that 


\begin{equation}
p\ast q =p\odot q + \sum_{m,n \geq 1}{B_{m,n}(p,q)},
\label{eqnStar}
\end{equation}


\noi where $B_{m,n}$ are bi-differential operators of order $m$ on $p$ and order $n$ on $q$. The product $\ast$ is called $\mcl{U}$-star product.


If $p$ and $q$ are polynomials, then the r.h.s is a finite sum. If we try to extend $(\ref{eqnStar})$ to general smooth functions on $v$, then the sum can be infinite and thus we run into convergence issues. This can be solved by introducing a parameter $\hbar$ and by working with


\[
\mcl{U}_\hbar V:=\mcl{T}V[[\hbar]]\langle v_1 \otimes v_2 - v_2 \otimes v_1 - \hbar[v_1,v_2] | v_1,v_2 \in V \rangle.
\]


\begin{ex}

\begin{enumerate}[label=\arabic*)]

\item If $v_1,v_2 \in V$, then


\begin{equation}
v_1 \ast v_2 = v_1 \odot v_2 + \frac{\hbar}{2}[v_1,v_2].
\label{eqnTrivial}
\end{equation}

\item Let $e^{v_1},e^{v_2}\in C^\infty_V$. Then


\[
e^{v_1} \ast e^{v_2}=e^{\text{CBH}(v_1,v_2)},
\]


\noi where $\text{CBH}(v_1,v_2)$ denotes the Campbell-Baker-Hausdorff series


\[
\text{CBH}(v_1,v_2)=v_1+v_2+\frac{\hbar}{2}[v_1,v_2] + \frac{\hbar^2}{12}([v_1,[v_1,v_2]]+[[v_1,v_2],v_2])+\cdots.
\]

\end{enumerate}

\end{ex}

We conclude that the $\mcl{U}$-star product gives us a map $\mcl{U}_V:\assOp \ra \edom_{\odot V}$.
As this map does not depend on the choice of $V$ and is given by poly-differential operators, we infer that it factors through $\oLie{}$, i.e.


\[
\begin{tikzcd}
\assOp \arrow{r}{\mcl{U}} \arrow[dashed,swap]{dr}{\mcl{U}_V} & \oLie{} \arrow{d}{\hat{\rho}} \\
& \edom_{\odot V}
\end{tikzcd},
\]


\noi for some morphism $\mcl{U}$ of operads satisfying the non-triviality condition


\begin{equation}
\begin{tikzpicture}[baseline={([yshift=-0.5ex]current bounding box.center)},scale=\scaleValue,transform shape]
\node (v0) at (0,1) {};
\node (v1) [circle,draw] at (0,0) {};
\node (v2) at (-1,-1) {\huge $1$};
\node (v3) at (1,-1) {\huge $2$};
\draw (v0) edge (v1);
\draw (v1) edge (v2);
\draw (v1) edge (v3);
\end{tikzpicture}
\lma
\begin{tikzpicture}[baseline={([yshift=-0.5ex]current bounding box.center)},scale=\scaleValue,transform shape]
\node [circle,draw,minimum size = \minRadius] at (0,1.25) {};
\node [circle,draw] at (-1,-1) {\huge $1$};
\node [circle,draw] at (1,-1) {\huge $2$};
\end{tikzpicture}
+\frac{1}{2}
\begin{tikzpicture}[baseline={([yshift=-0.5ex]current bounding box.center)},scale=\scaleValue,transform shape]
\node (v0) [circle,draw,minimum size = \minRadius] at (0,1.25) {};
\node (v1) [circle,draw,fill] at (0,0) {};
\node (v2) [circle,draw] at (-1,-1) {\huge $1$};
\node (v3) [circle,draw] at (1,-1) {\huge $2$};
\draw (v0) edge (v1);
\draw (v1) edge (v2);
\draw (v1) edge (v3);
\end{tikzpicture}
+
\text{terms with at least two internal vertices}.
\label{eqnTrivialOperads}
\end{equation}


This motivates the following definition:

\begin{defi}

Any morphism of operads $\mcl{U}:\assOp \ra \oLie{}$ satisfying $(\ref{eqnTrivialOperads})$ is called a \textit{S. Gutt quantization}.

\end{defi}

That the standard universal enveloping construction gives us an example of such a quantization was first noticed by Simone Gutt in \cite{MR2795152}. We classify below all such quantizations up to homotopy equivalence, and prove that they are all gauge equivalent to the universal enveloping one.

\begin{thm}
\label{theorem:gutt_one_dimensional}

Let $U$ be a S. Gutt quantization. Then the cohomology of $\deform(\assOp \ra \oLie{})$ is one-dimensional.

\end{thm}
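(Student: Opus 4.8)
The plan is to pin the complex down explicitly and then run a spectral sequence on the number of internal vertices. Since $\assOp$ has minimal resolution $\hoAss$, we compute $\deform(\hoAss \stackrel{\bar{\mcl U}}{\ra}\oFun(\LieOp))$, which as a graded vector space is $\prod_{n\geq 1}\big[\mbb{K}[\mbb{S}_n]^\ast\otimes\oFun(\LieOp)(n)\big]^{\mbb{S}_n}[1-n]\cong\prod_{n\geq 1}\oFun(\LieOp)(n)[1-n]$; as $\mbb{K}[\mbb{S}_n]$ is the regular representation no residual symmetry remains, so a cochain in arity $n$ is just a graph of $\oFun(\LieOp)$ with $n$ linearly ordered white in-vertices, and, all internal vertices and internal edges being of degree $0$ when $d=1$, it sits in cohomological degree $n-1$. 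Writing $m:=\mcl U(\mu)$ for the image of the binary generator of $\assOp$, the differential is the Gerstenhaber-type bracket
\[
\delta F \;=\; m\circ_1 F\;\pm\;m\circ_2 F\;\mp\!\!\sum_{v\in V_o(F)}\!\!F\circ_v m ,
\]
the compositions being those of $\oFun(\LieOp)$. By the non-triviality condition $(\ref{eqnTrivialOperads})$ write $m=m_0+\tfrac12 m_1+m_{\geq 2}$, where $m_0$ is the edgeless corolla on one white out- and two white in-vertices (the commutative product $\odot$ on $\odot V$), $m_1$ is the single black binary vertex with its two white in-vertices and white out-vertex (the Lie bracket, i.e. the value of $(\ref{morphism_lie_olie})$), and $m_{\geq 2}$ gathers the terms with at least two black vertices. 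The key feature is that inserting $m_0$ into $F$ either adjoins an isolated white in-vertex to $F$ or splits a white in-vertex of $F$ into two while redistributing its star-edges, that inserting $m_1$ creates exactly one new black vertex, and that each term of $m_{\geq 2}$ creates at least two.

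First I would run the spectral sequence of the decreasing, bounded-above and complete filtration by the number of black vertices (the deformation complex being completed with respect to this number, as is forced anyway since a Gutt quantization is a formal series of graphs), so that it converges. On the associated graded the differential is $\delta_0$, assembled solely from $m_0$; it is the differential of the full graph complex $\fgc$ acting on the ``white skeleton'' of a cochain --- adjoining isolated and, after a split, univalent white vertices, and splitting white vertices --- while the internal irreducible components ride along as inert decorations. \textbf{The main obstacle is the computation of $H(\delta_0)$}: one has to show it is supported on the graphs in which every white in-vertex carries exactly one star-edge (these edges antisymmetrised), so that in a surviving graph the white in-vertices are nothing but labels for the leaves of the internal irreducible components together with the direct leaf-edges from a white in-vertex to the white out-vertex. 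This is the operadic avatar of the Hochschild-Kostant-Rosenberg theorem for the smooth algebra $\odot V$ (polyvector fields replacing polydifferential operators); I would prove it by the same pairing/contracting-homotopy technique used in the proof of the previous theorem for the acyclicity of $C^{\leq 1}$ and of $\bigoplus_{N\geq 1}C_N$: an ``excess'' isolated or univalent white vertex can be created or deleted, displaying the relevant subquotients --- in particular the degree-zero part, killed because $\delta_0(\mathrm{id})=m_0$ --- as acyclic.

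On the $E_1$-page the surviving complex is small, its cochains being forests of internal irreducible components with white-labelled leaves, and the induced differential is the one coming from $m_1$: insertion of a single Lie bracket, which squares to zero precisely by the Jacobi identity (just as in the verification that $(\ref{morphism_lie_olie})$ is a morphism of operads). A short direct computation then shows that the cohomology of $(E_1,\delta_1)$ is one-dimensional, concentrated in cohomological degree $1$, and generated by $m_1$ itself --- the cochain that rescales the bracket, $\mu\mapsto\odot+(\tfrac12+\varepsilon)m_1+\cdots$, which comes from an honest rescaled Gutt quantization, hence from a genuine $\delta$-cocycle whose class is nonzero. Since $E_2$ is concentrated in a single cohomological degree, all further differentials vanish, $E_\infty=E_2=\mbb{K}$, which is the assertion. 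I would close by observing that the argument never used that $\assOp$ or $\LieOp$ were resolved: replacing $\assOp$ by $\hoAss$ and $\LieOp$ by $\LieOp_\infty$ only enlarges the set of black vertices and of i.c.c.-decorations, and the same filtration, the same HKR-type $E_1$ and the same one-line computation on $E_1$ carry over verbatim --- this yields Corollary \ref{cor:GuttInfinity}, consistently with the derived Poincaré-Birkhoff-Witt theorem of Khoroshkin-Tamaroff; alternatively, the case $d=1$ can also be extracted from the classification of Kontsevich formality maps governed by $\gc_2$.
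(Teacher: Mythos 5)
Your proposal follows the paper's proof essentially step for step: the same filtration by the number of internal (black) vertices, the same observation that the induced differential $\delta_0$ comes only from the product term $m_0$ and acts by adjoining/splitting white in-vertices, the same HKR-type identification of $H(\mcl{E}_0,\delta_0)$ with graphs all of whose white in-vertices are univalent (skew-symmetrized), and the same reduction of the next page to a complex built from $\deform(\LieOp \ra \LieOp)$, whose cohomology is one-dimensional. The only divergences are minor: the paper imports the $\mcl{E}_0$-cohomology computation from \cite{MR2762550} rather than reproving it by a contracting homotopy, and it settles the next page by identifying it with a subspace of $\odot(\deform(\LieOp \ra \LieOp)[-1])[1]$ and invoking the Künneth theorem rather than your unspecified ``short direct computation''; your explicit remarks on completeness/convergence of the filtration and on the surviving rescaling cocycle are sound additions that the paper leaves implicit.
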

\begin{proof}

Recall the deformation complex


\[ \deform(\assOp \stackrel{U}{\ra} \oLie{})=\prod_{n \geq 1}{\assOp(n)^\ast \otimes (\oLie{}(n))^{\mbb{S}_n}[-1]}=\prod_{n \geq 1}(\oLie{}(n))^{\mbb{S}_n}[1-n]
\]


\noi  equipped with the differential $d$ associated to $U$.


Consider a filtration by the number of internal vertices. The differential $d$ clearly respects the filtration and we consider the associated spectral sequence $(\mcl{E}_n,d_n)_{n\geq 0}$. The differential on the first page is only induced by the first term of $U$. We observe that if $\Gamma$ is connected, then the surviving terms are also connected and the differential is given by


\[
d_0 \Gamma = \sum_i \delta_i(\Gamma),
\]


\noi where $i$ ranges over the number of white vertices and $\delta_i$ acts by


\[
\delta_i \begin{tikzpicture}[baseline={([yshift=-0.5ex]current bounding box.center)},scale=\scaleValue,transform shape]
\node [circle,draw] (v1) at (0,0) {\huge $i$};
\node (v2) at (-1,1) {};
\node at (0.05,0.7) {\huge $\cdots$};
\node (v3) at (1,1) {};
\draw  (v1) edge (v2);
\draw  (v1) edge (v3);
\draw [decorate,decoration={brace,amplitude=5 pt},thick] (v2) -- (v3);
\node at (0,1.9) {\huge $I$};
\end{tikzpicture}=\sum_{\stackrel{I=I_1 \sqcup I_2}{|I_1|,|I_2| \geq 1}}{\begin{tikzpicture}[baseline={([yshift=-0.5ex]current bounding box.center)},scale=\scaleValue,transform shape]
\node [circle,draw] (v1) at (0,0) {\huge $i_1$};
\node (v2) at (-1.05,1.3) {};
\node at (0,1) {\huge $\cdots$};
\node (v3) at (0.95,1.3) {};
\draw  (v1) edge (v2);
\draw  (v1) edge (v3);
\node [circle,draw] (v4) at (3,0) {\huge $i_2$};
\node (v5) at (1.95,1.3) {};
\node (v6) at (3.95,1.3) {};
\node (v7) at (3,1) {\huge $\cdots$};
\draw [decorate,decoration={brace,amplitude=5 pt},thick] (v5) -- (v6);
\draw (v4) edge (v5);
\draw (v4) edge (v6);
\draw [decorate,decoration={brace,amplitude=5 pt},thick] (v2) -- (v3);
\node at (-0.05,2.1) {\huge $I_1$};
\node at (3,2.1) {\huge $I_2$};
\end{tikzpicture}},
\] 


This follows from the fact that the two terms with a disconnected white vertex only differ swapping $i_1$ and $i_2$ and thus by the skew-symmetrization of labels of white vertices these terms are opposites. In particular $\delta_i$ and $d_0(\Gamma)=0$ if all white vertices are univalent.


In \cite{MR2762550} it has been shown that the cohomology on the first page $H(\mcl{E}_0,d_0)$ is equal to the span of graphs of $\oLie{}$ such that every white vertex is univalent and the labelled white vertices are skew-symmetrized, i.e.


\[
H(\mcl{E}_0,d_0)=\langle\begin{tikzpicture}[baseline={([yshift=-0.5ex]current bounding box.center)},scale=\scaleValue,transform shape]
\node [circle,draw,minimum size = \minRadius] (v1) at (0,1.25) {};
\node [circle,draw,fill] (v2) at (-1,0) {};
\node [circle,draw,minimum size = \minRadius] (v3) at (0.5,-2) {};
\node [circle,draw,fill] (v4) at (-2,-1) {};
\node [circle,draw,minimum size = \minRadius] (v5) at (-3,-2) {};
\node [circle,draw,minimum size = \minRadius] (v6) at (-1,-2) {};
\node [circle,draw,fill] (v7) at (3,-0.5) {};
\node [circle,draw,minimum size = \minRadius] (v8) at (2,-2) {};
\draw  (v1) edge (v2);
\draw  (v2) edge (v3);
\draw  (v2) edge (v4);
\draw  (v4) edge (v5);
\draw  (v4) edge (v6);;
\draw  (v7) edge (v8);
\draw  (v1) edge (v7);
\node [circle,draw,minimum size = \minRadius] (v9) at (4,-2) {};
\draw  (v7) edge (v9);
\end{tikzpicture} \rangle.
\]


This complex can be identified with


\[
\text{span} \left\langle \begin{tikzpicture}[baseline={([yshift=-0.5ex]current bounding box.center)},scale=\scaleValue,transform shape]
\node  (v1) at (-1,1) {};
\node [circle,draw,fill] (v2) at (-1,0) {};
\node  (v3) at (0.5,-2) {};
\node [circle,draw,fill] (v4) at (-2,-1) {};
\node  (v5) at (-3,-2) {};
\node  (v6) at (-1,-2) {};
\node [circle,draw,fill] (v7) at (2.5,-0.5) {};
\node (v8) at (1.5,-2) {};
\draw  (v1) edge (v2);
\draw  (v2) edge (v3);
\draw  (v2) edge (v4);
\draw  (v4) edge (v5);
\draw  (v4) edge (v6);;
\draw  (v7) edge (v8);
\node  (v9) at (3.5,-2) {};
\draw  (v7) edge (v9);
\node (v10) at (2.5,1) {};
\draw  (v7) edge (v10);
\end{tikzpicture} 
 \right\rangle
\]


\noi where the outputs are symmetrized and the inputs are skew-symmetrized. This space can be seen as a subspace of $\odot (\deform(\LieOp \ra \LieOp)[-1])[1]$. Since we are working over a field we see that the cohomology of this space is one-dimensional as the cohomology of $\deform(\LieOp \ra \LieOp)$ is concentrated in degree $1$. 

\end{proof}

\begin{cor}

Let $U$ be a S. Gutt quantization. Then the map


\[
\deform(\LieOp \ra \LieOp) \lra \deform(\assOp \stackrel{U}{\ra} \oLie{}) 
\]


\noi is a quasi-isomorphism.

\end{cor}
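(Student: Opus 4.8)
The plan is to deduce this corollary directly from Theorem \ref{theorem:gutt_one_dimensional} together with the explicit description of the cohomology obtained in its proof. Recall that there is a natural morphism of operads $\LieOp \ra \assOp$ (the one written explicitly at the end of Section 2, sending the Lie bracket to the antisymmetrization of the associative product), and composing with $U:\assOp \ra \oLie{}$ gives a morphism $\LieOp \ra \oLie{}$. By general nonsense about deformation complexes this induces a chain map $\deform(\assOp \stackrel{U}{\ra} \oLie{}) \ra \deform(\LieOp \ra \oLie{})$, but that is the wrong direction; instead one uses the \emph{restriction} of operations. More precisely, the non-triviality condition $(\ref{eqnTrivialOperads})$ says that on the generator of $\assOp$ in arity $2$, the morphism $U$ has linear part equal to the generator of $i:\LieOp \ra \oLieC{}$ under antisymmetrization; so a deformation $\Gamma \in \deform(\LieOp \ra \LieOp)$ of the Lie structure produces, via $\mcl{U}$ and the antisymmetrization of inputs, a deformation of $U$. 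I would spell this out as an explicit chain map $\Phi:\deform(\LieOp \ra \LieOp) \ra \deform(\assOp \stackrel{U}{\ra} \oLie{})$ sending an arity-$n$ Lie cochain to the corresponding element of $\oLie{}$ supported on graphs whose internal part is a single i.c.c.\ (the ``tree'' coming from the Lie cochain) with $n$ univalent white input vertices, skew-symmetrized, and one white output vertex.

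First I would check that $\Phi$ is a well-defined chain map. The key point is that on such ``single-i.c.c., all-inputs-univalent'' graphs, the differential $d$ of $\deform(\assOp \stackrel{U}{\ra} \oLie{})$ restricted to this subspace matches the Chevalley--Eilenberg-type differential of $\deform(\LieOp \ra \LieOp)$: the part of $d$ coming from the linear term of $U$ produces only graphs that again have a single i.c.c.\ and univalent inputs (precomposing/postcomposing with the bracket corolla), while the higher terms of $U$ — which have at least two internal vertices — increase the number of i.c.c.'s or the internal complexity and therefore contribute to strictly higher filtration degree; on the associated graded they vanish and the induced differential $d_0$ acts exactly as recalled in the proof of Theorem \ref{theorem:gutt_one_dimensional}. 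In fact this is essentially already contained in that proof: the computation there identifies $H(\mcl{E}_0,d_0)$ with the symmetrized-output, skew-symmetrized-input span of single-i.c.c.\ graphs with univalent white vertices, i.e.\ with a subspace of $\odot(\deform(\LieOp\ra\LieOp)[-1])[1]$, and then extracts the arity-one-output (one i.c.c.) part as the piece with one-dimensional cohomology.

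So the proof reduces to: (i) the source complex $\deform(\LieOp\ra\LieOp)$ has cohomology $\mbb{K}$ concentrated in degree $1$ (this is the Fact from \cite{MR3348138} recalled in the excerpt); (ii) $\Phi$ lands in the filtration piece realizing $H(\mcl{E}_0,d_0)$, and on that associated-graded level $\Phi$ induces precisely the inclusion of the ``single i.c.c.'' summand; (iii) by Theorem \ref{theorem:gutt_one_dimensional} the total cohomology of the target is one-dimensional, and the spectral sequence argument in that proof shows the generator is detected already on $\mcl{E}_1$ by exactly this summand — hence the higher differentials and higher pages contribute nothing and $\Phi$ is a quasi-isomorphism. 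Concretely: $\Phi$ sends the unique degree-$1$ class of $\deform(\LieOp\ra\LieOp)$ (the bracket corolla) to the class $\frac12\,i(\text{bracket})$, which under $(\ref{eqnTrivialOperads})$ is the linear part of $U$ itself and is manifestly a nonzero cohomology class; since both sides are one-dimensional and $\Phi$ is injective on that class, it is an isomorphism on cohomology.

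I expect the main obstacle to be item (ii): pinning down rigorously that the chain map $\Phi$, after passing to the associated graded of the internal-vertex filtration used in the proof of Theorem \ref{theorem:gutt_one_dimensional}, is compatible with the identification of $H(\mcl{E}_0,d_0)$ as the skew/sym-ized single-i.c.c.\ span — in particular keeping track of the antisymmetrization of inputs coming from $\LieOp \ra \assOp$ versus the skew-symmetrization of white input vertices that appears in the deformation complex, and making sure the signs (governed by the rule in Section 2 of reading the bracket corolla as a degree-$d$ vertex) match. Once that bookkeeping is done, everything else is either quoted (Fact \cite{MR3348138}, Theorem \ref{theorem:gutt_one_dimensional}) or a formal comparison-of-spectral-sequences argument. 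A clean way to package it is: $\Phi$ is a morphism of filtered complexes, it is a quasi-isomorphism on $\mcl{E}_0$ (both sides computing the same single-i.c.c.\ cohomology in the relevant top internal-filtration degree, with all other summands on the target being acyclic as shown in the proof of Theorem \ref{theorem:gutt_one_dimensional} for the multi-i.c.c.\ part and collapsing for the rest), hence a quasi-isomorphism of the total complexes by the standard comparison theorem for filtered complexes with bounded-below exhaustive filtrations.
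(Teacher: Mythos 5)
Your argument is correct and coincides with what the paper intends: the corollary is stated without a separate proof, being an immediate consequence of Theorem \ref{theorem:gutt_one_dimensional} together with the recalled fact that $H(\deform(\LieOp \ra \LieOp))$ is one-dimensional, plus the observation that the generator (the bracket corolla) is sent to the class $\tfrac{1}{2}\,i(\text{bracket})$, which is exactly the class detected on the first page of the spectral sequence in the theorem's proof and is therefore nonzero. Your additional care in defining the chain map $\Phi$ and checking its compatibility with the internal-vertex filtration is more detail than the paper supplies, but it is precisely the right bookkeeping needed to make the statement rigorous.
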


We will now generalize the above results to $\LieOp_\infty$ and $\hoAss$ structures.

Let $V$ be a $\mbb{Z}$-graded vector space.

\begin{defi}

A homotopy S. Gutt quantization is a morphism of dg operads


\[
\varphi : \hoAss \lra \oFun(\LieOp_\infty)
\]


satisfying

\begin{equation*}
\begin{tikzpicture}[baseline={([yshift=-0.5ex]current bounding box.center)},scale=\scaleValue,transform shape]
\node [circle,draw] (v1) at (0,0) {};
\node (v2) at (-1,-1.5) {\huge $1$};
\node (v3) at (1,-1.5) {\huge $n$};
\node at (0.055,-1) {\huge $\cdots$};
\draw  (v1) edge (v2);
\draw  (v1) edge (v3);
\node (v4) at (0,1.25) {};
\draw  (v1) edge (v4);
\end{tikzpicture}
\lma
\left\{ \begin{array}{lc}
\begin{tikzpicture}[baseline={([yshift=-0.5ex]current bounding box.center)},scale=\scaleValue,transform shape]
\node [circle,draw,minimum size = \minRadius] at (0,1.25) {};
\node [circle,draw] at (-1,-1) {\huge $1$};
\node [circle,draw] at (1,-1) {\huge $2$};
\end{tikzpicture}
+\frac{1}{2}
\begin{tikzpicture}[baseline={([yshift=-0.5ex]current bounding box.center)},scale=\scaleValue,transform shape]
\node (v0) [circle,draw,minimum size = \minRadius] at (0,1.25) {};
\node (v1) [circle,draw,fill] at (0,0) {};
\node (v2) [circle,draw] at (-1,-1) {\huge $1$};
\node (v3) [circle,draw] at (1,-1) {\huge $2$};
\draw (v0) edge (v1);
\draw (v1) edge (v2);
\draw (v1) edge (v3);
\end{tikzpicture}
+
\text{terms with at least two internal vertices}.

& \text{if $n=2$,} \\[.5 cm ]
\frac{1}{n!}\begin{tikzpicture}[baseline={([yshift=-0.5ex]current bounding box.center)},scale=\scaleValue,transform shape]
\node [circle,draw,fill] (v1) at (0,0) {};
\node [circle,draw] (v2) at (-1,-1.5) {\huge $1$};
\node [circle,draw] (v3) at (1,-1.5) {\huge $n$};
\node at (0.055,-1) {\huge $\cdots$};
\node [circle,draw,minimum size = \minRadius] (v4) at (0,1.25) {};
\draw  (v1) edge (v2);
\draw  (v1) edge (v3);
\draw  (v1) edge (v4);
\end{tikzpicture}
+
\text{terms with at least two internal vertices} & \text{if $n \geq 3$.}
\end{array} \right.
\end{equation*}

\end{defi}

\begin{ex}

\begin{enumerate}[label=\arabic*)]

\item The constructions given by Baranovsky \cite{MR2470385}, Moreno-Fernández \cite{MR4381197} and Lada and Markl \cite{MR1327129} can be interpreted as such a map.

\item The Kontsevich formality map \cite{MR2062626} implies this formula, but as a map to a wheeled version $\oFun(\LieOp_\infty^\circlearrowright)$ which is ill-defined for infinite-dimensional Lie algebras. This was resolved by Shoikhet in \cite{MR1854132} by showing that wheels can be set to zero and thus we get a map of the desired form. 

\end{enumerate}

\end{ex}

\begin{cor}
\label{cor:GuttInfinity}

Let $\varphi$ be a homotopy S. Gutt quantization. Then the complex $\deform(\hoAss \stackrel{\varphi}{\ra} \oFun(\LieOp_\infty))$ is one-dimensional. In particular the map


\[
\deform(\hoAss \stackrel{\varphi}{\ra} \oFun(\LieOp_\infty)) \cong \deform(\LieOp_\infty \ra \LieOp_\infty).
\]


\noi is a quasi-isomorphism.

\end{cor}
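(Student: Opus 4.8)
The plan is to mimic the proof of Theorem~\ref{theorem:gutt_one_dimensional} in the homotopy setting; I describe only the new ingredients. Since $\hoAss(n)=\mbb{K}[\mbb{S}_n][n-2]$ is a shift of the regular representation,
\[
\deform(\hoAss \stackrel{\varphi}{\ra} \oFun(\LieOp_\infty)) = \prod_{n \geq 2} \oFun(\LieOp_\infty)(n)[1-n],
\]
with total differential $D=\delta+d_\varphi$: here $\delta$ is the internal differential of $\oFun(\LieOp_\infty)$ (it splits one black vertex via the differential of $\LieOp_\infty=\hoLie_1$) and $d_\varphi$ is the $\varphi$-part, assembled from the operadic insertions of $\Gamma$ into the $\varphi(\mu_k)$ and of the pieces of the $\varphi(\mu_k)$ at the white vertices of $\Gamma$. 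Both terms are non-decreasing in the number of internal (black) vertices, so I would filter by that number and work with the associated spectral sequence, expecting convergence to go through exactly as in Theorem~\ref{theorem:gutt_one_dimensional}.

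On the zeroth page only the part of $D$ preserving the number of black vertices survives. As $\delta$ strictly raises that number, $\varphi(\mu_k)$ is supported on graphs with at least one black vertex for $k\geq 3$, and $\varphi(\mu_2)$ has a single term with no black vertex --- the ordinary product $m$, a white output carrying two white input legs --- the induced $d_0$ is literally the zeroth-page differential of Theorem~\ref{theorem:gutt_one_dimensional}, namely $d_0=\sum_i\delta_i$ with $\delta_i$ splitting the $i$-th white input vertex. Its cohomology concerns only the white input vertices and is insensitive to the decoration of the black part, so \cite{MR2762550} applies verbatim: $H(\mcl{E}_0,d_0)$ is spanned by the graphs whose white input vertices are all univalent. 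Such a graph is a graded-symmetric product of connected components, each an element of $\LieOp_\infty$ (a tree of $\hoLie_1$-corollas) with root attached to the common white output vertex and leaves attached to distinct univalent white inputs; hence, just as in Theorem~\ref{theorem:gutt_one_dimensional},
\[
\mcl{E}_1 \cong \odot^{\geq 1}\bigl(\deform(\LieOp_\infty \ra \LieOp_\infty)[-1]\bigr)[1]
\]
as graded vector spaces, with $\LieOp_\infty$ in place of $\LieOp$.

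The remaining task is to identify the first-page differential $d_1$ on $\mcl{E}_1$. Running the operadic insertions of $\Gamma$ against the one-black-vertex parts of the $\varphi(\mu_k)$ should produce, on each tree component, exactly the root-grafting and leaf-grafting operations of the deformation differential of $\LieOp_\infty \ra \LieOp_\infty$, together with cross terms that either create a non-univalent white input (so vanish in $H(\mcl{E}_0,d_0)$) or cancel pairwise by the skew-symmetry of the input labels; adding the internal $\hoLie_1$-differential $\delta$ reconstructs the whole deformation differential of $\LieOp_\infty \ra \LieOp_\infty$, acting as a derivation for the graded-symmetric product. By the two facts recalled in the section on deformation complexes --- equivalently, since $\deform(\LieOp_\infty \ra \LieOp_\infty)=\deform(\hoLie_1 \ra \hoLie_1)$ --- the cohomology $H(\deform(\LieOp_\infty \ra \LieOp_\infty))$ is one-dimensional and sits in the same single degree as $H(\deform(\LieOp \ra \LieOp))$. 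The Künneth formula over a characteristic-zero field then gives
\[
\mcl{E}_2 = H(\mcl{E}_1,d_1) \cong \odot^{\geq 1}\bigl(H(\deform(\LieOp_\infty \ra \LieOp_\infty))[-1]\bigr)[1],
\]
and the parity that made this one-dimensional in Theorem~\ref{theorem:gutt_one_dimensional} is unchanged, so $\mcl{E}_2$ is one-dimensional; being concentrated in a single bidegree it admits no higher differentials, the spectral sequence degenerates, and $H(\deform(\hoAss \stackrel{\varphi}{\ra} \oFun(\LieOp_\infty)))$ is one-dimensional. For the last assertion of the corollary I would check, as in the corollary following Theorem~\ref{theorem:gutt_one_dimensional}, that the comparison map $\deform(\LieOp_\infty \ra \LieOp_\infty) \ra \deform(\hoAss \stackrel{\varphi}{\ra} \oFun(\LieOp_\infty))$ --- obtained by pushing self-deformations of $\LieOp_\infty$ forward along $\oFun$ and $\varphi$ --- hits the generator $\Lambda$ nontrivially, whence it is a quasi-isomorphism.

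I expect the main obstacle to be the explicit identification of $d_1$: one has to verify that the internal $\hoLie_1$-differential and all one-black-vertex terms of the $\varphi(\mu_k)$ genuinely assemble into the deformation differential of $\LieOp_\infty \ra \LieOp_\infty$ and that every spurious term is either killed in $\mcl{E}_1$ or cancels; a secondary point to nail down is that the filtration by number of black vertices is, on each arity component, tame enough for the spectral sequence to converge. Were $\oFun$ known to preserve quasi-isomorphisms one could try instead to combine the target-invariance fact with $\oFun(\LieOp_\infty)\simeq\oFun(\LieOp)$, but that does not visibly reduce the $\hoAss$-source to the $\assOp$-source, so the spectral-sequence argument above seems preferable.
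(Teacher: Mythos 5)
Your spectral-sequence argument is a genuinely different route from the paper's, and it is the harder one. The paper proves this corollary in a few lines by performing exactly the reduction you considered and dismissed at the end: since $\pi:\hoLie_1\ra\LieOp$ is a quasi-isomorphism and $\oFun$ is exact, $\oFun(\pi)$ is a quasi-isomorphism, so by the target-invariance fact $\deform(\hoAss\stackrel{\varphi}{\ra}\oFun(\hoLie_1))$ is quasi-isomorphic to $\deform(\hoAss\stackrel{\oFun(\pi)\varphi}{\ra}\oFun(\LieOp))$. Your objection --- that this ``does not visibly reduce the $\hoAss$-source to the $\assOp$-source'' --- is the point you missed: for $d=1$ the operad $\oFun(\LieOp)$ is concentrated in degree $0$ (both the bracket generator and the internal edges have degree $0$), so $\oFun(\pi)\varphi$ kills every generator $\mu_n$ with $n\geq 3$ for degree reasons and hence factors as $\mcl{U}\circ\pi'$ through $\assOp$, with $\mcl{U}$ an ordinary S.~Gutt quantization; and since $\deform(\assOp\stackrel{\mcl{U}}{\ra}\oFun(\LieOp))$ is \emph{by definition} $\deform(\hoAss\stackrel{\mcl{U}\pi'}{\ra}\oFun(\LieOp))$, the two complexes literally coincide and Theorem \ref{theorem:gutt_one_dimensional} applies verbatim. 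No new spectral sequence is needed.

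As for your own route: the setup (filtration by black vertices, identification of $d_0$ with white-vertex splitting, the computation of $H(\mcl{E}_0,d_0)$ via \cite{MR2762550}, and the description of $\mcl{E}_1$ as a symmetric product) is sound and parallels the theorem correctly, but it leaves one substantive step unverified, which you yourself flag: the identification of $d_1$ with the derivation extension of the deformation differential of $\deform(\LieOp_\infty\ra\LieOp_\infty)$. That step requires checking that the $\frac{1}{k!}$-normalized one-black-vertex terms of \emph{all} the $\varphi(\mu_k)$, the internal $\hoLie_1$-differential, and the skew-symmetrization of input labels assemble with the correct signs and coefficients, and that every cross term dies --- essentially a re-proof of the hardest part of Theorem \ref{theorem:gutt_one_dimensional} in a larger setting. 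You would also need to say something about convergence, since the black-vertex filtration is unbounded within each arity (the paper glosses over the same point in the theorem, so this is not specific to your argument). In short: not wrong, and probably completable, but a long detour around a two-step reduction that is available and that the paper takes.
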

\begin{proof}

Since the canonical map $\pi :\hoLie_1 \ra \LieOp$ is a quasi-isomorphism and the functor $\oFun$ is exact, we see that the map $\oFun(\pi)$ is a quasi-isomorphism. Hence the deformation complexes $\deform(\hoAss \stackrel{\varphi}{\ra} \oFun(\hoLie_1))$ and $\deform(\hoAss \stackrel{\oFun(\pi)\varphi}{\ra} \oFun(\LieOp))$ are quasi-isomorphic. In addition the map $\oFun(\pi) \varphi: \hoAss \ra \oFun(\LieOp)$ factors through $\assOp$, i.e.


\[
\begin{tikzcd}
\hoAss \arrow{r}{\pi'} \arrow[dashed,swap]{dr}{\oFun(\pi)\varphi} & \assOp \arrow{d}{\mcl{U}} \\
& \oFun(\LieOp)
\end{tikzcd},
\]


\noi where $\mcl{U}$ is a S. Gutt quantization. Thus 


\[
H(\deform(\hoAss \stackrel{\oFun(\pi)\varphi}{\ra} \oFun(\LieOp)))=H(\deform(\assOp \stackrel{\mcl{U}}{\ra} \oFun(\LieOp))).
\]


By Theorem \ref{theorem:gutt_one_dimensional} the cohomology $H(\deform(\assOp \stackrel{\mcl{U}}{\ra} \oFun(\LieOp)))$ is one-dimensional and the result follows.

\end{proof}

This result fully agrees with the PBW theorem obtained in \cite{Khoroshkin2023} using  the bar-cobar duality in the homotopy theory.

\end{section}

\begin{section}{On the unique non-trivial deformation of the map $i$} \label{uniqueDeformation}

Let $\mfk{g}$ be a Lie algebra over some field $\mbb{K}$ with a countable basis $\{t_i\}_{i \in I}$. Then $\odot \mfk{g}$ can be identified with the polynomial ring $\mbb{K}[t_I]$. The Gutt quantization formula or the PBW quantization formula applied to polynomials $P(t)$ and $Q(t)$ can be given in terms of differential operators as follows (see Theorem 5 in \cite{bekaert2005universal}):

\[
P(t) \ast Q(t) = \exp(\sum_i{t_im^i(\frac{\partial}{\partial u}\frac{\partial}{\partial v})})\left.P(u)Q(v)\right|_{u=v=t},
\]

where $m^i$ is defined as follows: Let $X=\sum_i{x^it_i}$, $Y=\sum_i{y^it_i}$ be arbitrary elements of $\mfk{g}$ (we understand the numerical coefficients $x^i$ and $y^i$ as formal parameters). Then define

\[
m(X,Y):=\log(e^X e^Y)-X-Y = \sum{t_im^i_{j_1,\cdots,j_p,k_1,\cdots,k_q}x^{j_1}\cdots x^{j_p}y^{k_1} \cdots y^{k_q}}
\] 

and set $m(\frac{\partial}{\partial u},\frac{\partial}{\partial v})$ to be the differential operator obtained from the above power series by replacing each $x^{j_1}\cdots x^{j_p}$ by $\frac{\partial}{\partial u_{j_1}}\cdots \frac{\partial}{\partial u_{j_p}}$ and similarly for $y^{k_1}\cdots y^{k_q}$. It is hard in general to rewrite this formula in terms of our graphs as an explicit morphism of operads

\[
\assOp \lra \oLie{}
\]

but we only need to see its quotient modulo the graphs $I \subset \oLie{}$ containing at least one internal edge

\[
\assOp \lra \oLie{}/I
\]

which is given explicitly by

\[
\begin{tikzpicture}[baseline={([yshift=-0.5ex]current bounding box.center)},scale=\scaleValue,transform shape]
\node (v0) [circle,draw,fill,label=below: \huge $1$] at (0,0) {};
\node (v1) [circle,draw,fill,label=below: \huge $2$] at (2,0) {};
\end{tikzpicture}
+
\begin{tikzpicture}[baseline={([yshift=-0.5ex]current bounding box.center)},scale=\scaleValue,transform shape]
\node (v0) [circle,draw,fill,label=below: \huge $1$] at (0,0) {};
\node (v1) [circle,draw,fill,label=below: \huge $2$] at (2,0) {};
\draw [-latex] (v0) edge (v1);
\end{tikzpicture}
+\frac{1}{2}\begin{tikzpicture}[baseline={([yshift=-0.5ex]current bounding box.center)},scale=\scaleValue,transform shape]
\node (v0) [circle,draw,fill,label=below: \huge $1$] at (0,0) {};
\node (v1) [circle,draw,fill,label=below: \huge $2$] at (2,0) {};
\draw [bend left,-latex] (v0) edge (v1);
\draw [bend right,-latex] (v0) edge (v1);
\end{tikzpicture}
+\frac{1}{3!}\begin{tikzpicture}[baseline={([yshift=-0.5ex]current bounding box.center)},scale=\scaleValue,transform shape]
\node (v0) [circle,draw,fill,label=below: \huge $1$] at (0,0) {};
\node (v1) [circle,draw,fill,label=below: \huge $2$] at (2,0) {};
\draw [-latex] (v0) edge (v1);
\draw [bend left,-latex] (v0) edge (v1);
\draw [bend right,-latex] (v0) edge (v1);
\end{tikzpicture}
+\cdots .
\]

After skew-symmetrization of the indices $1$ and $2$ this series simplifies as follows

\[
\begin{tikzpicture}[baseline={([yshift=-0.5ex]current bounding box.center)},scale=\scaleValue,transform shape]
\node (v0) [circle,draw,fill] at (0,0) {};
\node (v1) [circle,draw,fill] at (2,0) {};
\draw [-latex] (v0) edge (v1);
\end{tikzpicture}
+\frac{1}{3!}\begin{tikzpicture}[baseline={([yshift=-0.5ex]current bounding box.center)},scale=\scaleValue,transform shape]
\node (v0) [circle,draw,fill] at (0,0) {};
\node (v1) [circle,draw,fill] at (2,0) {};
\draw [-latex] (v0) edge (v1);
\draw [bend left,-latex] (v0) edge (v1);
\draw [bend right,-latex] (v0) edge (v1);
\end{tikzpicture}
+\frac{1}{5!}\begin{tikzpicture}[baseline={([yshift=-0.5ex]current bounding box.center)},scale=\scaleValue,transform shape]
\node (v0) [circle,draw,fill] at (0,0) {};
\node (v1) [circle,draw,fill] at (2,0) {};
\draw [-latex] (v0) edge (v1);
\draw [bend left,-latex] (v0) edge (v1);
\draw [bend right,-latex] (v0) edge (v1);
\draw [bend left=75,-latex] (v0) edge (v1);
\draw [bend right=75,-latex] (v0) edge (v1);
\end{tikzpicture}
+\cdots .
\]

As we see the first non-trivial term is precisely our theta class, we complete the proof that the composition

\[
\LieOp \lra \assOp \lra \oLie{},
\]

is precisely the unique homotopy non-trivial deformation of the naive map $i$.

\end{section}


\nocite{*}
\bibliographystyle{alpha}
\bibliography{Article12022bib}

\end{document}